\documentclass[11pt]{amsart}
\usepackage{amsmath}
\usepackage{amssymb}
\usepackage{amsthm}
\usepackage{amscd}
\usepackage{enumerate}
\usepackage{enumitem}
\usepackage{verbatim}
\usepackage[all]{xy}
\usepackage{mathrsfs}
\usepackage{mathabx}
\usepackage{url}
\usepackage{hyperref}

\theoremstyle{plain}
\newtheorem{thm}{Theorem}[section]
\newtheorem{prop}[thm]{Proposition}
\newtheorem{lem}[thm]{Lemma}
\newtheorem{cor}[thm]{Corollary}

\theoremstyle{definition}

\newtheorem{rmk}[thm]{Remark}

\newcommand{\Aut}{\mathrm{Aut}}

\DeclareMathOperator{\Coker}{Coker}

\newcommand{\EV}{\mathrm{EV}}

\newcommand{\har}{\mathrm{har}}

\newcommand{\Hom}{\mathrm{Hom}}

\DeclareMathOperator{\Ind}{Ind}

\DeclareMathOperator{\Ker}{Ker}

\newcommand{\opp}{\mathrm{op}}

\DeclareMathOperator{\Res}{Res}

\newcommand{\st}{\mathrm{st}}
\newcommand{\St}{\mathrm{St}}

\newcommand{\cL}{\mathcal{L}}

\newcommand{\cO}{\mathcal{O}}

\newcommand{\cT}{\mathcal{T}}

\newcommand{\cV}{\mathcal{V}}

\newcommand{\bC}{\mathbb{C}}

\newcommand{\bF}{\mathbb{F}}

\newcommand{\bP}{\mathbb{P}}
\newcommand{\bQ}{\mathbb{Q}}

\newcommand{\bZ}{\mathbb{Z}}

\newcommand{\frem}{\mathfrak{m}}
\newcommand{\frn}{\mathfrak{n}}

\newcommand{\Kinf}{K_\infty}
\newcommand{\Cinf}{\mathbb{C}_{\infty}}

\makeatletter
    
    \@addtoreset{equation}{section}
\makeatother

\makeatletter
\renewcommand{\p@enumii}{}
\makeatother

\begin{document}

\title[Constancy of Hecke eigensystems for Drinfeld cuspforms]{On some constancy of Hecke eigensystems for Drinfeld cuspforms of finite slope}

\author{Shin Hattori}
\address{Department of Natural Sciences, Tokyo City University, 1-28-1 Tamazutsumi, Setagaya-ku, Tokyo 158-8557, Japan}

\date{\today}


\begin{abstract}
	Let $p$ be a rational prime, let $q>1$ be a $p$-power integer, let $\bF_q$ be the field of $q$ elements and let $A=\bF_q[t]$ be the polynomial ring over $
	\bF_q$. Let $\frn\in A$ be a nonzero element and let $\wp\in A$ be a monic irreducible polynomial of positive degree.
	Let $k\geq 2$ and $r\geq 1$ be integers.
	Let $S_k(\Gamma_1(\frn\wp^r))$ be the space of Drinfeld cuspforms of level $\Gamma_1(\frn\wp^r)$ and weight $k$. 
	In this paper, we prove that the multiplicity of a Hecke eigensystem of finite $\wp$-slope in $S_k(\Gamma_1(\frn\wp^r))$ is equal to $q^{(r-1)\deg(\wp)}$ times 
	that in $S_k(\Gamma_1(\frn\wp))$.
	In particular, this shows that a Hecke eigensystem of finite $\wp$-slope appears in $S_k(\Gamma_1(\frn\wp^r))$ if and only if it appears in 
	$S_k(\Gamma_1(\frn\wp))$.
\end{abstract}

\maketitle



\section{Introduction}

Let $p$ be a rational prime, let $q>1$ be a $p$-power integer, let $\bF_q$ be the field of $q$ elements, let $t$ be an indeterminate, let $A=\bF_q[t]$, $K=\bF_q(t)
$, $K_\infty=\bF_q((1/t))$ and $\cO_{\Kinf}=\bF_q[[1/t]]$. Let $\Cinf$ be the $(1/t)$-adic completion of an algebraic closure of $\Kinf$. Let $
\Omega=\Cinf\setminus \Kinf$ be the Drinfeld 
upper half plane, which is equipped with a natural structure of a rigid analytic variety over $\Cinf$. 

Let $\Pi$ be the set of monic irreducible polynomials of positive degree in $A$. Let $\frn\in A$ be a nonzero element 
and let $\wp\in \Pi$ satisfy $\wp\nmid\frn$. Let $k\geq 2$ and $r\geq 1$ be integers. 
For any nonzero $\frem\in A$, let
\[
\Gamma_1(\frem)=\left\{\gamma\in \mathit{SL}_2(A)\ \middle|\ \gamma\equiv\begin{pmatrix}
	1 & *\\0& 1
\end{pmatrix}\bmod \frem\right\}.
\]

A Drinfeld modular form is a rigid analytic function $f:\Omega\to \Cinf$ satisfying a transformation condition and regularity at cusps similar to those for elliptic modular forms. Though the definitions of elliptic and Drinfeld modular forms are parallel, arithmetic of Drinfeld modular forms is sometimes very different from the elliptic case. 

It has been realized that Drinfeld modular forms of level $\Gamma_1(\frem)$ behave rather strangely. For example, for a positive integer $M$, the space of elliptic 
modular forms of level $\Gamma_1(M)$ is written as the direct sum 
of the spaces of level $\Gamma_0(M)$ with nebentypus. On the other hand, such a 
decomposition is not possible in general for the space of Drinfeld modular forms of level $\Gamma_1(\frem)$, since representations of  $(A/\frem A)^\times$ over $
\Cinf$ are not necessarily semisimple.
    
In \cite{Ha_DMO}, we proved an unexpected triviality of the Hecke action on the $t$-ordinary part of the space of Drinfeld cuspforms of level $\Gamma_1(t^r)$. The 
aim of the present paper is to generalize it to the finite slope part for more general levels: it turns out that not the triviality but a constancy in some sense 
holds 
for Hecke eigensystems appearing in the finite $\wp$-slope part of the space of Drinfeld cuspforms of level $\Gamma_1(\frn\wp^r)$.

To state the main theorem, we fix some notation. Let $S_k(\Gamma_1(\frn\wp^r))$ be the $\Cinf$-vector space of Drinfeld cuspforms of level
$\Gamma_1(\frn\wp^r)$ and weight $k$, on which the Hecke operator $T_Q$ at $Q$ acts for any $Q\in \Pi$. We write $T_Q$ also as $U_Q$ when $Q\mid \frn\wp^r$.
For any $\alpha\in \Cinf$, we denote by $S_k(\Gamma_1(\frn\wp^r))(T_Q-\alpha)$ the generalized eigenspace of $T_Q$ belonging to $\alpha$.

Let $\bar{K}$ be the algebraic closure of $K$ in $\Cinf$. Let $K_\wp$ be the completion of $K$ at $\wp$ and let $\bar{K}_\wp$ be an algebraic
closure of $K_\wp$. Let $v_\wp$ be the $\wp$-adic additive valuation on $\bar{K}_\wp$ satisfying $v_\wp(\wp)=1$.
We choose once and for all an embedding $\iota_\wp: \bar{K}\to \bar{K}_\wp$ of $K$-algebras. 
For any $a\in \bQ\cup\{+\infty\}$, we say that an element $\alpha\in \bar{K}$ is of slope $a$ if
$v_\wp(\iota_\wp(\alpha))=a$. 
For any $\alpha\in \bar{K}$, we denote by $m(\Gamma_1(\frn\wp^r),k,\wp,\alpha)$ the multiplicity of $\alpha$ as an eigenvalue of $U_\wp$ acting on
$S_k(\Gamma_1(\frn\wp^r))$. For any $a\in \bQ\cup\{+\infty\}$,
we denote by $d(\Gamma_1(\frn\wp^r),k,a)$ the multiplicity of $U_\wp$-eigenvalues of slope $a$ appearing in $S_k(\Gamma_1(\frn\wp^r))$. 

For any $
\lambda=(\lambda_Q)_{Q\in \Pi}\in \bar{K}^\Pi$, we say that $\lambda$ is a Hecke eigensystem appearing in $S_k(\Gamma_1(\frn\wp^r))$
if there exists a nonzero element $f\in S_k(\Gamma_1(\frn\wp^r))$ satisfying $f|T_Q=\lambda_Q f$ for all $Q\in \Pi$, and denote by 
\[
m(\Gamma_1(\frn\wp^r),k,\lambda)=\dim_{\Cinf}\bigcap_{Q\in \Pi} S_k(\Gamma_1(\frn\wp^r))(T_Q-\lambda_Q)
\]
the multiplicity of the Hecke eigensystem $\lambda$ appearing in $S_k(\Gamma_1(\frn\wp^r))$.

Now our main theorem is as follows.

\begin{thm}\label{ThmMainIntro}
	\begin{enumerate}
		\item\label{ThmMainIntro_Slope}(Corollary \ref{CorMultiSlope}) For any $\alpha\in \bar{K}^\times$ and $a\in \bQ$, we have
		\[
		\begin{aligned}
			m(\Gamma_1(\frn\wp^r),k,\wp,\alpha)&=q^{(r-1)\deg(\wp)}m(\Gamma_1(\frn\wp),k,\wp,\alpha),\\
			d(\Gamma_1(\frn\wp^r),k,a)&=q^{(r-1)\deg(\wp)}d(\Gamma_1(\frn\wp),k,a).
		\end{aligned}
		\]
		In particular, the sets of nonzero $U_\wp$-eigenvalues ({\textit resp.} finite slopes) appearing in $S_k(\Gamma_1(\frn\wp^r))$ and $S_k(\Gamma_1(\frn\wp))$ 
		are the same.
		\item\label{ThmMainIntro_Eigensys}(Theorem \ref{ThmConstancy}) Let $\lambda=(\lambda_Q)_{Q\in\Pi}\in \bar{K}^\Pi$ satisfy $\lambda_\wp\neq 0$. Then 
		we have
		\[
		m(\Gamma_1(\frn\wp^r),k,\lambda)=q^{(r-1)\deg(\wp)}m(\Gamma_1(\frn\wp),k,\lambda).
		\]
		In particular, $\lambda$ is a Hecke eigensystem appearing in $S_k(\Gamma_1(\frn\wp^r))$ if and only if it is a Hecke eigensystem appearing in 
		$S_k(\Gamma_1(\frn\wp))$.
	\end{enumerate}
\end{thm}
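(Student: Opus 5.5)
Since the theorem is a summary of Corollary \ref{CorMultiSlope} and Theorem \ref{ThmConstancy}, the plan is to prove a single structural statement about the finite-slope part of $S_k(\Gamma_1(\frn\wp^r))$ and deduce both parts from it. We work with harmonic cocycles. By Teitelbaum's isomorphism, $S_k(\Gamma_1(\frm))$ is identified, Hecke-equivariantly, with the space $C^{\har}_k(\Gamma_1(\frm))$ of $\Gamma_1(\frm)$-equivariant harmonic cocycles on the Bruhat--Tits tree with values in $V_k=\Hom(\Sigma^{k-2}\Cinf^2,\Cinf)$. This reduces everything to finite-dimensional linear algebra over the tree, where the operator $U_\wp$ is given by an explicit sum over the $q^{\deg\wp}$ matrices $\begin{pmatrix}1&\beta\\0&\wp\end{pmatrix}$, with $\deg\beta<\deg\wp$.

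\textbf{Step 1.} Compare levels $\frn\wp^{r}$ and $\frn\wp^{r+1}$ via $U_\wp$. Let $\Gamma_{r}=\Gamma_1(\frn\wp^r)$ and $\Gamma'=\Gamma_1(\frn\wp^{r})\cap\Gamma^0(\wp)$. Conjugation by $\mathrm{diag}(\wp,1)$ carries $\Gamma_1(\frn\wp^{r+1})$ to a group containing $\Gamma'$. The standard factorization $U_\wp=\mathrm{Tr}\circ|\mathrm{diag}(\wp,1)^{-1}$ (up to twist) shows the following: for $r\geq1$, $U_\wp$ on level $\frn\wp^{r+1}$ has image inside forms that are invariant under a larger group $\Delta_r$. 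This $\Delta_r$ is generated by $\Gamma_1(\frn\wp^{r+1})$ and the unipotent-type elements $\begin{pmatrix}1&0\\ \frn\wp^r c&1\end{pmatrix}$, twisted appropriately. Hence on the finite-slope (i.e.\ $U_\wp$-invertible) part, $S_k(\Gamma_{r+1})^{\mathrm{fs}}=S_k(\Gamma_{r+1})^{\Delta_r,\mathrm{fs}}$. Here $\Gamma_{r+1}$ is normal in $\Delta_r$, and the quotient $\Delta_r/\Gamma_{r+1}$ is $\ker((A/\wp^{r+1})^\times\to(A/\wp^r)^\times)\cong A/\wp$, an elementary abelian $p$-group of order $q^{\deg\wp}$.

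\textbf{Step 2.} Compute the multiplicity factor. We need the finite-slope part of level $\Gamma_{r+1}$ to be, as a Hecke module, $q^{\deg\wp}$ copies of that of level $\Gamma_r$ in the Grothendieck-group (semisimplification) sense. The plan is to realize $C^{\har}_k(\Gamma_{r+1})$ as $C^{\har}_k(\Gamma_r)$-type cocycles with values in the induced module $V_k\otimes\Cinf[\Gamma_r/\Gamma_{r+1}]$, using Shapiro-type identifications on the quotient graph. The group $\Gamma_r/\Gamma_{r+1}\cong A/\wp$ is a $p$-group, and we are in characteristic $p$. Therefore $\Cinf[A/\wp]$ has a filtration by $q^{\deg\wp}$ trivial one-dimensional pieces, namely the powers of the augmentation ideal. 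These pieces are stable under the diamond operators and the Hecke operators, because they commute with the finite group action. Taking finite-slope parts is exact on finite-dimensional $U_\wp$-modules, since it amounts to generalized eigenspaces for nonzero eigenvalues. This gives
\[
[S_k(\Gamma_{r+1})^{\mathrm{fs}}]=q^{\deg\wp}[S_k(\Gamma_r)^{\mathrm{fs}}]
\]
in the Grothendieck group of modules over the Hecke algebra. Each Hecke eigensystem with $\lambda_\wp\ne0$, and each generalized eigenvalue of $U_\wp$, then has its multiplicity scaled by $q^{\deg\wp}$. Inducting on $r$ gives both formulas in part (1) and the identity in part (2). The identity for $d(\cdot,k,a)$ follows by summing over $\alpha$ of slope $a$.

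\textbf{Main obstacle.} The hard point is the exactness underlying Step 2. Harmonic cocycles with values in $V_k\otimes\Cinf[A/\wp]$ form a left-exact functor. Cusp and harmonicity conditions, together with torsion in the stabilizers of the quotient graph, could break exactness on the full space.

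To handle this, I would first use Step 1 to show that the finite-slope part is controlled by a finite subgraph. Concretely, cocycles are determined by their values on edges near the ends of the quotient graph, which are translated by $U_\wp$. On that finite subgraph the relevant stabilizers have order prime to $p$ once $\frn\wp\neq1$, so the functor is exact there. If a cuspidal correction term appears, I would show it vanishes on the finite-slope part, i.e.\ that $U_\wp$ is nilpotent on the boundary contribution. This parallels the triviality argument of \cite{Ha_DMO} in the $t$-ordinary case.
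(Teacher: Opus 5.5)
Your proposal has a genuine gap: Steps 1 and 2 rest on false group-theoretic claims, and the exactness problem you identify, which is the real content, is left unresolved.

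\textbf{Step 1 conflates two different enlargements of $\Gamma_{r+1}=\Gamma_1(\frn\wp^{r+1})$, and its conclusion is false.} Adjoining the elements $\bigl(\begin{smallmatrix}1&0\\ \frn\wp^r c&1\end{smallmatrix}\bigr)$ generates all of $\Gamma_1(\frn\wp^r)$. In that group $\Gamma_{r+1}$ has index $q^{2\deg(\wp)}$ and is not normal: conjugating $\bigl(\begin{smallmatrix}1&1\\0&1\end{smallmatrix}\bigr)$ by $\bigl(\begin{smallmatrix}1&0\\ \frn\wp^r&1\end{smallmatrix}\bigr)$ gives lower-right entry $1+\frn\wp^r$. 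So the group ``$\Gamma_r/\Gamma_{r+1}\cong A/\wp$'' used in Step 2 does not exist.

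The normal enlargement with quotient $1+\wp^rA_{r+1}\cong A/\wp$ is a different group: $\Gamma_1^{\Theta}(\frn,\wp^{r+1})$ with $\Theta=1+\wp^rA_{r+1}$, acting by diamond operators. Either choice of $\Delta_r$ makes the claimed equality $S_k(\Gamma_{r+1})^{\mathrm{fs}}=S_k(\Gamma_{r+1})^{\Delta_r,\mathrm{fs}}$ false whenever nonzero finite-slope forms exist:
\begin{itemize}
\item If $\Delta_r=\Gamma_1(\frn\wp^r)$, the equality says the finite-slope parts at levels $\frn\wp^{r+1}$ and $\frn\wp^r$ coincide. That contradicts the factor $q^{\deg(\wp)}$ you are trying to prove.
\item If $\Delta_r=\Gamma_1^{\Theta}(\frn,\wp^{r+1})$, it contradicts Proposition \ref{PropFree} and Lemma \ref{LemGrpRingLocal}. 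These make the finite-slope part free over $\Cinf[\Theta_{r+1}]$, hence over $\Cinf[\Theta]$, so it is not $\Theta$-invariant.
\end{itemize}

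The factorization $U_\wp=\iota\circ s$ only works between groups with the same diagonal congruence that differ in the lower-left congruence. Relaxing the lower-left congruence of $\Gamma_1(\frn\wp^{r+1})$ while keeping $d\equiv 1\bmod\wp^{r+1}$ does not give a group. This is exactly why the paper first uses diamond operators to pass to $\Gamma_1^p(\frn,\wp^r)$, where the diagonal condition is only modulo $\wp$.

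Step 2 fails for the same reason. Hecke operators, above all $U_\wp$, do not commute with the $\Gamma_1(\frn\wp^r)$-action on the coset space. So there is no Hecke-stable filtration whose graded pieces are $S_k(\Gamma_1(\frn\wp^r))$; if there were, you would get the wrong factor $q^{2\deg(\wp)}$. Only diamond operators commute with all Hecke operators, so the factor must come from them.

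\textbf{The ``main obstacle'' is the heart of the proof, and your remedy does not resolve it.} $\Gamma_1(\frem)$ is $p'$-torsion free, so stabilizers of simplices are $p$-groups, and they are nontrivial along the cuspidal ends. That is precisely the bad case for exactness in characteristic $p$. Nothing in your sketch shows that $U_\wp$ is nilpotent on a ``boundary contribution''.

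The missing idea is the paper's freeness argument. Take $\Gamma=\Gamma_1(\frn\wp^r)$ and $\tilde{\Gamma}=\Gamma_1^p(\frn,\wp^r)\rhd\Gamma$. The Steinberg module has a split resolution $0\to\St\to\bZ[\bar{\cT}_1^{o,\tilde{\Gamma}\text{-}\st}]\to\bZ[\cT_0^{\tilde{\Gamma}\text{-}\st}]\to 0$ by finitely generated free $\bZ[\tilde{\Gamma}]$-modules. Applying $-\otimes_{\bZ[\Gamma]}V_k(\Cinf)$ exhibits $S_k(\Gamma)\cong\St\otimes_{\bZ[\Gamma]}V_k(\Cinf)$ as a direct summand of a free module over the local ring $\Cinf[\Theta_r]$, so it is free. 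Since diamond operators commute with Hecke operators, every simultaneous generalized eigenspace is a free direct summand. Its $\Theta_r$-invariants, which are the same eigenspace at level $\Gamma_1^p(\frn,\wp^r)$, have dimension $1/|\Theta_r|$ of it. Lemma \ref{LemFiniteSlopeGenES} then identifies the finite-slope parts at levels $\Gamma_1^p(\frn,\wp^r)$ and $\Gamma_1(\frn\wp)$.

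Your inductive scheme can be repaired along these lines by taking $\Delta_r=\Gamma_1^{\Theta}(\frn,\wp^{r+1})$ with $\Theta=1+\wp^rA_{r+1}$:
\begin{itemize}
\item Freeness over $\Cinf[\Theta]$ gives the factor $q^{\deg(\wp)}$ between $\Gamma_1(\frn\wp^{r+1})$ and $\Delta_r$.
\item The $U_\wp$-factorization does work between $\Delta_r$ and $\Gamma_1(\frn\wp^r)$, and it identifies their finite-slope parts.
\end{itemize}
Once freeness is established, your Grothendieck-group bookkeeping is fine.
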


This shows a stark contrast to the case of elliptic modular forms where Hecke eigensystems of finite $p$-slope vary in $p$-adic analytic families 
when the weight and the $p$-adic valuation of the level vary. Though in some cases we have $\wp$-adic continuous families of Drinfeld eigenforms
varying the weight \cite{Ha_DMBF}, Theorem \ref{ThmMainIntro} indicates that varying the level only yields constant families of Hecke eigensystems
of finite slope appearing in the space of Drinfeld modular forms.

Let us give an idea of the proof of Theorem \ref{ThmMainIntro}. Let $\Theta_r$ be the multiplicative group $1+\wp(A/\wp^r A)$. It is a commutative $p$-group acting 
on $S_k(\Gamma_1(\frn\wp^r))$ 
via diamond operators. Let $\Cinf[\Theta_r]$ be the group ring of $\Theta_r$ over $\Cinf$.
Then the key point of the proof is the freeness of the $\Cinf[\Theta_r]$-module $S_k(\Gamma_1(\frn\wp^r))$ (Proposition \ref{PropFree}). 
Since $\Cinf$ is of characteristic $p$, the ring $\Cinf[\Theta_r]$ is local and any direct summand of the free $
\Cinf[\Theta_r]$-module $S_k(\Gamma_1(\frn\wp^r))$ is also free. This applies in particular to generalized eigenspaces of each Hecke operator. Since the $
\Theta_r$-fixed part of $\Cinf[\Theta_r]$ is a $\Cinf$-vector space of dimension one, this relates the multiplicity of a 
$U_\wp$-eigenvalue or a Hecke eigensystem in $S_k(\Gamma_1(\frn\wp^r))$ with that in $S_k(\Gamma_1(\frn\wp^r))^{\Theta_r}=S_k(\Gamma_1^p(\frn,
\wp^r))$, where
\[
\Gamma_1^p(\frn,\wp^r)=\left\{\gamma\in \Gamma_1(\frn)\ \middle|\ \gamma\bmod \wp^r\in\begin{pmatrix}
	\Theta_r & *\\0& \Theta_r
\end{pmatrix}\right\}.
\]
Then the theorem follows from the fact that the inclusion $S_k(\Gamma_1(\frn\wp))\to S_k(\Gamma_1^p(\frn,\wp^r))$ induces a Hecke equivariant isomorphism on finite slope parts (Lemma \ref{LemFiniteSlopeGenES}). 

The organization of this paper is as follows. In \S\ref{SecHC}, we recall descriptions of Drinfeld cuspforms using harmonic cocycles and the Steinberg module 
\cite{Tei}. In \S\ref{SecHecke}, we recall the definition of Hecke operators and diamond operators, and
give the aforementioned isomorphism between the finite slope part of level $\Gamma_1(\frn\wp)$ and that of level $\Gamma_1^p(\frn,\wp^r)$.
In \S\ref{SecFree}, we prove the key freeness of the $\Cinf[\Theta_r]$-module 
$S_k(\Gamma_1(\frn\wp^r))$. In \S\ref{SecConst}, we show Theorem \ref{ThmMainIntro}. 

\subsection*{Acknowledgments} This work was supported by JSPS KAKENHI Grant Number 	26K06733.

\section{Combinatorial descriptions of Drinfeld cuspforms}\label{SecHC}

In this section, we recall the descriptions of the space of Drinfeld cuspforms using harmonic cocycles and the Steinberg module due to Teitelbaum \cite[p.~506]
{Tei},
following the normalization of \cite[\S5.3]{Boeckle} (see also \cite[\S2]{Ha_DMBF}).

For any $A$-algebra $B$, we consider $B^2$ as the set of row vectors which admits a left action of $\mathit{GL}_2(B)$ by $\gamma\circ (x,y)=(x,y)\gamma^{-1}$.
Let $\cT$ be the Bruhat--Tits tree for $\mathit{SL}_2(\Kinf)$. We denote by $\cT_0$ the set of vertices of $\cT$ and by $\cT_1^o$ the set of oriented edges of $
\cT$. 
By 
definition, the set $\cT_0$ consists of 
$\Kinf^\times$-equivalence classes of $\cO_{\Kinf}$-lattices in $\Kinf^2$. For any $e\in \cT_1^o$, the origin, the terminus and the opposite edge of $e$ are denoted 
by $o(e)$, $t(e)$ and $-e$, respectively. Then the group $\{\pm 1\}$ acts on $\cT_1^o$ by $(-1)e=-e$. 

Let $\Gamma$ be a congruence subgroup of $\mathit{SL}_2(A)$ which is $p'$-torsion free (that is, every element of $\Gamma$ of finite order has $p$-power order).
Consider its action $\circ$ on $\cT$ via the natural inclusion $\Gamma\to \mathit{GL}_2(\Kinf)$. We say that 
a vertex or an oriented edge of $\cT$ is $\Gamma$-stable if 
its 
stabilizer in $
\Gamma$ is trivial. We denote by $\cT_0^{\Gamma\text{-}\st}$ and $\cT_1^{o,\Gamma\text{-}\st}$ the sets of $\Gamma$-stable vertices 
and oriented edges, respectively. They are stable under the action of $\Gamma$. More generally, let $\tilde{\Gamma}$ be any subgroup
of $\mathit{SL}_2(A)$ satisfying $\tilde{\Gamma}\rhd \Gamma$.
Then for any $s\in \cT_0\sqcup \cT_1^o$ and $\tilde{\gamma}\in \tilde{\Gamma}$, the element $s$ is $\Gamma$-stable if and only if $\tilde{\gamma}\circ s$ is $
\Gamma$-stable. Thus the sets $\cT_0^{\Gamma\text{-}\st}$ and $\cT_1^{o,\Gamma\text{-}\st}$ are stable under the action of 
$\tilde{\Gamma}$.

For any set $S$, we denote by $\bZ[S]$ the free $\bZ$-module with basis $\{[s]\mid s\in S\}$. 
Let
\[
\bZ[\bar{\cT}_1^{o,\Gamma\text{-}\st}]:=\bZ[\cT_1^{o,\Gamma\text{-}\st}]/\langle [e]+[-e]\mid e\in \cT_1^{o,\Gamma\text{-}\st}\rangle.
\]
For any $v\in \cT_0$, define an element $[v]^{\Gamma\text{-}\st}\in \bZ[\cT_0^{\Gamma\text{-}\st}]$ by $[v]^{\Gamma\text{-}\st}=[v]$ if $v$ is $\Gamma$-stable
 and $[v]^{\Gamma\text{-}\st}=0$ otherwise. Let
\[
\partial_\Gamma:\bZ[\bar{\cT}_1^{o,\Gamma\text{-}\st}]\to \bZ[{\cT}_0^{\Gamma\text{-}\st}],\quad [e]\mapsto [t(e)]^{\Gamma\text{-}\st}-[o(e)]^{\Gamma\text{-}
\st}.
\]
For $\tilde{\Gamma}\supseteq \Gamma$ as above, the map $\partial_\Gamma$ is $\tilde{\Gamma}$-equivariant.

We refer to the kernel of the augmentation map
\[
\St:=\Ker(\bZ[\bP^1(K)]\to \bZ)
\]
as the Steinberg module. We consider $\St$ as a left $\bZ[\Gamma]$-module via
\[
\gamma\circ (x:y)=(x:y)\gamma^{-1},\quad (x:y)\in \bP^1(K).
\]
Then the left $\bZ[\Gamma]$-module $\St$ is finitely generated and projective. 
Moreover, there exists a split exact sequence of left $\bZ[\Gamma]$-modules
\begin{equation}\label{EqnExactSt}
	\xymatrix{
		0 \ar[r]& \St \ar[r]& \bZ[\bar{\cT}_1^{o,\Gamma\text{-}\st}] \ar[r]^{\partial_\Gamma} & \bZ[\cT_0^{\Gamma\text{-}\st}]\ar[r]& 0
	}
\end{equation}
(\cite[\S5,3]{Boeckle}, \cite[Ch.~II, \S2.9]{Se}).

Let $k\geq 2$ be an integer and let $H_{k-2}(\Cinf)$ be the $\Cinf$-subspace of the polynomial ring $\Cinf[X,Y]$ consisting of homogeneous polynomials of degree 
$k-2$. It is equipped with a left action of $\mathit{GL}_2(\Cinf)$ defined by $\gamma\circ (X,Y)=(X,Y)\gamma$. Let
\[
V_k(\Cinf)=\Hom_{\Cinf}(H_{k-2}(\Cinf),\Cinf).
\]
It is also equipped with a natural left action $\circ$ of $\mathit{GL}_2(\Cinf)$. For any $\gamma=\begin{pmatrix}
	a&b\\c&d
\end{pmatrix}\in \mathit{GL}_2(\Cinf)$, $\omega\in V_k(\Cinf)$ and $P(X,Y)\in 
H_{k-2}(\Cinf)$, the action $\circ$ is defined by
\[
(\gamma\circ \omega)(P(X,Y))=\omega(\gamma^{-1}\circ P(X,Y))=\det(\gamma)^{2-k}\omega(P(dX-cY,-bX+aY)).
\]

A harmonic cocycle of level $\Gamma$ and weight $k$ is a map $c:\cT_1^o\to V_k(\Cinf)$ satisfying the following conditions:
\begin{enumerate}
	\item\label{DfnHCEquiv} $c(\gamma\circ e)=\gamma\circ c(e)$ for any $\gamma\in \Gamma$ and $e\in \cT_1^o$,
	\item\label{DfnHCPm} $c(-e)=-c(e)$ for any $e\in \cT_1^o$,
	\item\label{DfnHcHarm} $\sum_{e\in \cT_1^o,\ v=t(e)} c(e)=0$ for any $v\in \cT_0$.
\end{enumerate}
The $\Cinf$-vector space of harmonic cocycles of level $\Gamma$ and weight $k$ is denoted by $C_k^\har(\Gamma)$. 
Similarly, the $\Cinf$-vector space
of maps $c:\cT_1^o\to V_k(\Cinf)$ satisfying the conditions (\ref{DfnHCEquiv}) and (\ref{DfnHCPm}) is denoted by $C_k^\pm(\Gamma)$.

Let $\cV_k(\Gamma)=\St\otimes_{\bZ[\Gamma]}V_k(\Cinf)$ and 
\[
\cL_{1,k}(\Gamma)= \bZ[\bar{\cT}_1^{o,\Gamma\text{-}\st}]\otimes_{\bZ[\Gamma]}V_k(\Cinf),\quad \cL_{0,k}(\Gamma)=\bZ[\cT_0^{\Gamma\text{-}\st}]
\otimes_{\bZ[\Gamma]}V_k(\Cinf),
\]
where the modules on the left of $\otimes$ are considered as right $\bZ[\Gamma]$-modules via the action $s|_\gamma=\gamma^{-1}\circ s$. 
Then (\ref{EqnExactSt}) yields an exact sequence of $\Cinf$-vector spaces
\[
\xymatrix{
	0 \ar[r]& \cV_k(\Gamma) \ar[r]& \cL_{1,k}(\Gamma) \ar[r]& \cL_{0,k}(\Gamma)\ar[r]& 0.
}
\]

Let $\Lambda_1\subseteq \cT_1^{o,\Gamma\text{-}\st}$ be a complete set of representatives of $\Gamma\backslash \cT_1^{o,\Gamma\text{-}\st}/\{\pm 1\}$.
Then the $\Cinf$-linear map
\[
\Phi_\Gamma: C_k^\pm(\Gamma)\to \cL_{1,k}(\Gamma),\quad c\mapsto \sum_{e\in \Lambda_1}[e]\otimes c(e)
\]
is an isomorphism independent of the choice of $\Lambda_1$ \cite[p.~9]{Ha_DMBF}. Moreover,
from \cite[p.~506]{Tei}, we see that it induces a $\Cinf$-linear isomorphism
\begin{equation*}
\Phi_\Gamma: C_k^\har(\Gamma)\to \cV_k(\Gamma).
\end{equation*}

We denote by $S_k(\Gamma)$ the $\Cinf$-vector space of Drinfeld cuspforms of level $\Gamma$ and weight $k$. 
For any rigid analytic function $f:\Omega\to \Cinf$
and $\gamma=\begin{pmatrix}
	a&b\\c&d
\end{pmatrix}\in \mathit{GL}_2(\Kinf)$, let $f|_k\gamma$ be the rigid analytic function on $\Omega$ defined by
\[
(f|_k\gamma)(z)=\det(\gamma)^{k-1}(cz+d)^{-k}f\left(\frac{az+b}{cz+d}\right).
\]
Then we can associate with any $e\in \cT_1^o$ an element $\Res_{\Gamma}(f)(e)\in V_k(\Cinf)$ satisfying
\begin{equation}\label{EqnResSlash}
	\Res_\Gamma(f|_k\gamma)(e)=\gamma^{-1}\circ \Res_\Gamma(f)(\gamma\circ e)
\end{equation}
for any $\gamma\in\mathit{GL}_2(\Kinf)$, which gives a $\Cinf$-linear isomorphism
\begin{equation*}
	\Res_\Gamma: S_k(\Gamma)\to C_k^\har(\Gamma),\quad f\mapsto (e\mapsto \Res_\Gamma(f)(e))
\end{equation*}
(\cite[Theorem 16]{Tei}, \cite[Theorem 5.10]{Boeckle}).

Let $\tilde{\Gamma}$ be a subgroup of $\mathit{SL}_2(A)$ satisfying $\tilde{\Gamma}\rhd \Gamma$. Let $G=\tilde{\Gamma}/\Gamma$.
For any right $\bZ[\tilde{\Gamma}]$-module $M$ and left $\bZ[\tilde{\Gamma}]$-module $N$, the group $\tilde{\Gamma}$ acts from the right on $M\otimes_{\bZ[\Gamma]}
N$ by
\[
(m\otimes n){\tilde{\gamma}}:=m{\tilde{\gamma}}\otimes \tilde{\gamma}^{-1}n
\]
for any $m\in M$, $n\in N$ and $\tilde{\gamma}\in \tilde{\Gamma}$. This induces a right $G$-action on $M\otimes_{\bZ[\Gamma]}
N$ which is functorial on $M$ and $N$. In particular, we have natural right $G$-actions on $\cV_k(\Gamma)$, $\cL_{1,k}(\Gamma)$ and $\cL_{0,k}(\Gamma)$.
On the other hand, the group $\tilde{\Gamma}$ acts on $C_k^\pm(\Gamma)$, $C_k^\har(\Gamma)$ and $S_k(\Gamma)$ from the right by
\[
c\mapsto c|_k\tilde{\gamma}:=(e\mapsto \tilde{\gamma}^{-1}\circ c(\tilde{\gamma}\circ e)),\quad f\mapsto f|_k\tilde{\gamma},
\]
which induce right $G$-actions on them.

\begin{lem}\label{LemQuotAction}
	The isomorphisms 
	\[
	\Phi_{\Gamma}: C_k^{\pm}(\Gamma)\to \cL_{1,k}(\Gamma),\quad \Phi_{\Gamma}\circ \Res_{\Gamma}: S_k(\Gamma)\to \cV_k(\Gamma)
	\]
	are $G$-equivariant.
\end{lem}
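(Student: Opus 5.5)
The plan is to check equivariance directly on the explicit formula for $\Phi_\Gamma$, and then to reduce the second map to the first by treating $\Res_\Gamma$ separately.

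\textbf{Step 1: $\Res_\Gamma$ is $G$-equivariant.} For $f\in S_k(\Gamma)$ and $\tilde{\gamma}\in\tilde{\Gamma}$, (\ref{EqnResSlash}) gives
\[
\Res_\Gamma(f|_k\tilde{\gamma})(e)=\tilde{\gamma}^{-1}\circ\Res_\Gamma(f)(\tilde{\gamma}\circ e)=(\Res_\Gamma(f)|_k\tilde{\gamma})(e).
\]
So $\Res_\Gamma$ intertwines the two right actions of $\tilde{\Gamma}$. It therefore intertwines the induced $G$-actions.

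\textbf{Step 2: $\Phi_\Gamma$ on $C_k^\pm(\Gamma)$.} Fix a set of representatives $\Lambda_1$ of $\Gamma\backslash\cT_1^{o,\Gamma\text{-}\st}/\{\pm1\}$. Since $\tilde{\Gamma}$ normalizes $\Gamma$ and preserves $\cT_1^{o,\Gamma\text{-}\st}$, the set $\tilde{\gamma}\circ\Lambda_1$ is again such a set of representatives. Write $e'=\tilde{\gamma}\circ e$. By independence of the choice of $\Lambda_1$,
\[
\Phi_\Gamma(c|_k\tilde{\gamma})=\sum_{e\in\Lambda_1}[e]\otimes\tilde{\gamma}^{-1}\circ c(\tilde{\gamma}\circ e)=\sum_{e'\in\tilde{\gamma}\circ\Lambda_1}[\tilde{\gamma}^{-1}\circ e']\otimes\tilde{\gamma}^{-1}\circ c(e').
\]
The right action on $\bZ[\bar{\cT}_1^{o,\Gamma\text{-}\st}]$ is $s|_{\tilde{\gamma}}=\tilde{\gamma}^{-1}\circ s$. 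Hence the last sum equals $\sum_{e'}([e']\otimes c(e'))\tilde{\gamma}$. Applying independence of $\Lambda_1$ once more, this is $\Phi_\Gamma(c)\tilde{\gamma}$.

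\textbf{Step 3: the composite on $S_k(\Gamma)$.} The inclusion $\cV_k(\Gamma)\to\cL_{1,k}(\Gamma)$ is induced by the $\tilde{\Gamma}$-equivariant map $\St\to\bZ[\bar{\cT}_1^{o,\Gamma\text{-}\st}]$. By functoriality of the $G$-action, this inclusion is $G$-equivariant. Similarly $C_k^\har(\Gamma)\subseteq C_k^\pm(\Gamma)$ is a $\tilde{\Gamma}$-stable subspace. Therefore $\Phi_\Gamma\circ\Res_\Gamma$ is $G$-equivariant, as a composite of $G$-equivariant maps that land in $G$-stable subspaces.

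\textbf{Main obstacle.} The only delicate point is the reindexing in Step 2. It needs the well-definedness of $\Phi_\Gamma$ under changing representatives, including sign changes $e\mapsto -e$, which is handled by $c(-e)=-c(e)$ together with $[-e]=-[e]$. It also needs the $\tilde{\Gamma}$-stability of the set of $\Gamma$-stable edges. Both of these are recorded earlier in the paper.
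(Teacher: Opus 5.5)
Your proposal is correct and essentially reproduces the paper's proof. It uses the same reindexing over the translated representative set $\tilde{\gamma}\circ\Lambda_1$, justified by independence of $\Phi_\Gamma$ from the choice of $\Lambda_1$, and the same deduction for $\Phi_\Gamma\circ\Res_\Gamma$ from (\ref{EqnResSlash}) together with $G$-stability of $C_k^\har(\Gamma)\subseteq C_k^\pm(\Gamma)$ and $\cV_k(\Gamma)\subseteq\cL_{1,k}(\Gamma)$.

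The one fact you assert without justification is the $\tilde{\Gamma}$-equivariance of the injection $\St\to\bZ[\bar{\cT}_1^{o,\Gamma\text{-}\st}]$. It does not follow formally from the recorded $\tilde{\Gamma}$-equivariance of $\partial_\Gamma$, which only gives $G$-stability of the image; the paper takes it from \cite[Remark 1.4]{BGP}.
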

\begin{proof}
	Take any $c\in  C_k^{\pm}(\Gamma)$ and $\tilde{\gamma}\in \tilde{\Gamma}$. Let $\Lambda_1$ be a
	complete set of representatives of $\Gamma\backslash \cT_1^{o,\Gamma\text{-}\st}/\{\pm 1\}$. Since $\tilde{\gamma}^{-1}\Gamma\tilde{\gamma}=\Gamma$, the set
	$\tilde{\gamma}\circ\Lambda_1=\{\tilde{\gamma}\circ e\mid e\in \Lambda_1\}$ is also a complete set of representatives of the same coset space. Since the map $
	\Phi_{\Gamma}$ is 
	independent of the choice of $\Lambda_1$, we have
	\[
	\begin{aligned}
		\Phi_{\Gamma}(c|_k\tilde{\gamma})&=\sum_{e\in \Lambda_1} [e]\otimes \tilde{\gamma}^{-1}\circ c(\tilde{\gamma}\circ e)=\sum_{e\in \tilde{\gamma}\circ 
		\Lambda_1}
		[\tilde{\gamma}^{-1}\circ e]\otimes 
		\tilde{\gamma}^{-1}\circ c(e)\\
		&=\sum_{e\in \tilde{\gamma}\circ \Lambda_1}[e]|_{\tilde{\gamma}}\otimes 
		\tilde{\gamma}^{-1}\circ c(e)=\Phi_{\Gamma}(c){\tilde{\gamma}}.
	\end{aligned}
	\]
	Hence the first assertion follows.
	
	By \cite[Remark 1.4]{BGP}, the injection $\St\to \bZ[\bar{\cT}_1^{o,\Gamma\text{-}\st}]$ of (\ref{EqnExactSt}) 
	is $
	\tilde{\Gamma}$-equivariant. Thus the $\Cinf$-subspace $\cV_k(\Gamma)\subseteq \cL_{1,k}(\Gamma)$ is stable under the action of $G$.
	Since $C_k^\har(\Gamma)\subseteq C_k^\pm(\Gamma)$ is also stable under the $G$-action,
	the latter assertion follows from the former and (\ref{EqnResSlash}). 
\end{proof}



\section{Hecke operators and diamond operators}\label{SecHecke}

Let $\frn\in A$ be a nonzero element and let $\wp\in \Pi$ satisfy $\wp\nmid \frn$. For any integer $n\geq 
1$, let $A_n=A/\wp^n A$. Let $r\geq 1$ be an integer and let 
\[
\Theta_r=1+\wp A_r,
\]
which is a subgroup of the multiplicative group $A_r^\times$. 

Let $\Theta$ be any subgroup of $\Theta_r$. Let
\[
\Gamma_1^\Theta(\frn,\wp^r)=\left\{\gamma\in \Gamma_1(\frn)\ \middle|\ \gamma\bmod \wp^r\in \begin{pmatrix}
	\Theta & *\\0& \Theta
\end{pmatrix}\right\}.
\]
Since $\Theta$ is a $p$-group, we see that $\Gamma_1^\Theta(\frn,\wp^r)$ is a congruence subgroup of $\mathit{SL}_2(A)$ which is $p'$-torsion free.
When $\Theta=\Theta_r$, we denote $\Gamma_1^\Theta(\frn,
\wp^r)$ by $\Gamma_1^p(\frn,\wp^r)$. 
Note that $\Gamma_1(\frn\wp^r)=\Gamma_1^{\{1\}}(\frn,\wp^r)$ and we have an isomorphism
\begin{equation}\label{EqnIsomQuotTheta}
\Gamma_1^\Theta(\frn,\wp^r)/\Gamma_1(\frn\wp^r)\to \Theta,\quad \begin{pmatrix}
	a&b\\c&d
\end{pmatrix}\mapsto d\bmod \wp^r.
\end{equation}

\begin{lem}\label{LemGrpRingLocal}
	The group ring $\Cinf[\Theta]$ is an Artinian local ring. In particular, for any free $\Cinf[\Theta]$-module of finite rank, its direct summand
	as a $\Cinf[\Theta]$-module is free of finite rank.
\end{lem}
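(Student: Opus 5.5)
The plan is to use the standard fact that the group algebra of a finite $p$-group over a field of characteristic $p$ is local, and then deduce the statement about direct summands from projective-equals-free over local rings.

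\emph{Step 1: Artinian.} Since $\Theta\subseteq \Theta_r$ is a finite group, $\Cinf[\Theta]$ is a finite-dimensional $\Cinf$-algebra. It is therefore Artinian.

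\emph{Step 2: local.} Let $I$ be the augmentation ideal, i.e.\ the kernel of $\Cinf[\Theta]\to\Cinf$, $\sum a_g g\mapsto \sum a_g$.
\begin{itemize}
\item $\Theta$ is a finite $p$-group: $|\Theta_r|=|\wp A_r|=q^{(r-1)\deg(\wp)}$, a power of $p$.
\item For any $g\in\Theta$ we have $g^{p^N}=1$ for some $N$. In characteristic $p$ this gives $(g-1)^{p^N}=g^{p^N}-1=0$, so each generator $g-1$ of $I$ is nilpotent.
\item $\Theta$ is commutative, so $\Cinf[\Theta]$ is commutative. Hence $I$ is generated by finitely many commuting nilpotent elements and is a nilpotent ideal. (For a noncommutative $p$-group one would instead argue by induction on the order using a central subgroup of order $p$, but this is not needed here.)
\item $\Cinf[\Theta]/I\cong\Cinf$ is a field, so $I$ is a maximal ideal.
\item Every maximal ideal is prime and contains the nilpotent ideal $I$, so it equals $I$.
\end{itemize}
Thus $\Cinf[\Theta]$ is local with maximal ideal $I$.

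\emph{Step 3: direct summands.} Let $F$ be free of finite rank and $F=P\oplus Q$ as $\Cinf[\Theta]$-modules.
\begin{itemize}
\item $P$ is finitely generated and projective.
\item Over a local ring, finitely generated projective modules are free. Concretely, choose elements of $P$ lifting a $\Cinf$-basis of $P/IP$. By Nakayama's lemma (valid because $I$ is nilpotent) they give a surjection $\pi\colon \Cinf[\Theta]^m\to P$.
\item Since $P$ is projective, $\pi$ splits: $\Cinf[\Theta]^m\cong P\oplus K$ with $K=\Ker\pi$.
\item Reducing modulo $I$ and comparing $\Cinf$-dimensions gives $K/IK=0$. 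Since $K$ is a direct summand of a finitely generated module, it is finitely generated, so Nakayama gives $K=0$.
\end{itemize}
Hence $P$ is free, of finite rank $m\le \operatorname{rank}F$.

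There is no real obstacle; the only point needing care is the nilpotence of $I$, which here is immediate from commutativity together with Frobenius on $(g-1)$.
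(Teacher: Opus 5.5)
Your proof is correct and follows essentially the paper's route. The paper obtains locality from the explicit presentation $\Cinf[\Theta]\simeq \Cinf[X_1,\ldots,X_s]/((X_1-1)^{p^{n_1}},\ldots,(X_s-1)^{p^{n_s}})$, via the structure theorem for finite abelian groups and the same Frobenius identity $(g-1)^{p^N}=g^{p^N}-1$ that you apply directly to the augmentation ideal; it then cites that finitely generated projective modules over a local ring are free, which you prove via Nakayama.
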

\begin{proof}
	Since $\Theta$ is a commutative $p$-group, it is isomorphic to $\bigoplus_{i=1}^s \bZ/p^{n_i} \bZ$
	with some nonnegative integers $n_1,\ldots, n_s$. 
	Since the characteristic of $\Cinf$ is $p$, we have an isomorphism of $\Cinf$-algebras
	\[
	\begin{aligned}
	\Cinf[\Theta]&\simeq \Cinf[X_1,\ldots, X_s]/(X_1^{p^{n_1}}-1,\ldots, X_s^{p^{n_s}}-1)\\
	&=\Cinf[X_1,\ldots, X_s]/((X_1-1)^{p^{n_1}},\ldots, (X_s-1)^{p^{n_s}}).
	\end{aligned}
	\]
	This shows the former assertion. For the latter, such a direct summand is finitely generated and projective over the local ring $\Cinf[\Theta]$.
	Thus it is free of finite rank.
\end{proof}

For any $d\in (A/\frn\wp^rA)^\times$, take any matrix $\eta_d\in\mathit{SL}_2(A)$ satisfying
\[
\eta_d\equiv \begin{pmatrix}
	* & *\\0& d
\end{pmatrix}\bmod \frn\wp^r.
\]
As in \cite[\S2.2]{Ha_DMO}, we define the diamond operator $\langle d\rangle_{\frn\wp^r}$ acting on $S_k(\Gamma_1(\frn\wp^r))$ by
\[
f|\langle d\rangle_{\frn\wp^r}=f|_k\eta_d,
\]
which is independent of the choice of $\eta_d$. Since 
\begin{equation*}
	\eta_d \Gamma_1^\Theta(\frn,\wp^r)\eta_d^{-1}= \Gamma_1^\Theta(\frn,\wp^r), 
\end{equation*}
the subspace
$S_k(\Gamma_1^\Theta(\frn,\wp^r))$ is stable under the diamond operators.

Consider the natural isomorphism
\[
\rho:(A/\frn\wp^rA)^\times\to (A/\frn A)^\times \times A_r^\times.
\]
For any $d\in A_r^\times$, let $[1,d]=\rho^{-1}((1,d))$.
Since the map
\[
\Theta\to (A/\frn\wp^rA)^\times,\quad d\mapsto [1,d]
\]
is a group homomorphism, we have a right $\Theta$-action on $S_k(\Gamma_1(\frn\wp^r))$ defined by
\[
\Theta\to \Aut(S_k(\Gamma_1(\frn\wp^r)))^\opp,\quad d\mapsto \langle [1,d]\rangle_{\frn\wp^r}.
\]
By (\ref{EqnIsomQuotTheta}), we have
\begin{equation}\label{EqnThetaFixed}
	S_k(\Gamma_1(\frn\wp^r))^\Theta=S_k(\Gamma_1^\Theta(\frn,\wp^r)).
\end{equation} 

Let $Q\in \Pi$. Write
\[
\Gamma_1^\Theta(\frn,\wp^r)\begin{pmatrix}
	1&0\\0&Q
\end{pmatrix}\Gamma_1^\Theta(\frn,\wp^r)=\coprod_{i\in I(Q)}\Gamma_1^\Theta(\frn,\wp^r) \xi_i.
\]
The Hecke operator $T_Q$ at $Q$ acting on $S_k(\Gamma_1^\Theta(\frn,\wp^r))$
is defined by
\[
f|T_Q =\sum_{i\in I(Q)} f|_k\xi_i.
\]
When $Q\mid \frn\wp^r$, we also write $U_Q$ for $T_Q$.
Since the explicit description of $T_Q$ given in \cite[\S3.1]{Ha_DMBF} is independent of $\Theta$, the natural inclusion
\[
S_k(\Gamma_1^\Theta(\frn,\wp^r))\to S_k(\Gamma_1(\frn\wp^r))
\]
is compatible with $T_Q$ for any $Q$. Moreover, we can show that Hecke operators commute with each other.
By \cite[Lemma 2.3]{Ha_DMO}, the diamond operators commute with all 
Hecke operators.

For a finite dimensional $\Cinf$-vector space $V$, a $\Cinf$-linear map $T:V\to V$ and $\alpha\in \Cinf$, we denote by 
$V(T-\alpha)$ the generalized eigenspace of $T$ belonging to $\alpha$.
It equals $\Ker((T-\alpha)^m)$ for any sufficiently large integer $m$.
Let $\EV(\Gamma_1^\Theta(\frn,\wp^r),k,Q)$ be the set of eigenvalues of $T_Q$
acting on $S_k(\Gamma_1^\Theta(\frn,\wp^r))$. Then we have
\begin{equation}\label{EqnGenESDecomp}
S_k(\Gamma_1^\Theta(\frn,\wp^r))=\bigoplus_{\alpha\in \EV(\Gamma_1^\Theta(\frn,\wp^r),k,Q)} S_k(\Gamma_1^\Theta(\frn,\wp^r))(T_Q-\alpha),
\end{equation}
where each direct summand is stable under all Hecke operators and diamond operators. By \cite[Proposition 2.2]{Ha_DMBF}, we have
\[
\EV(\Gamma_1^\Theta(\frn,\wp^r),k,Q)\subseteq \bar{K}.
\]

Consider the natural inclusion
\[
\iota: S_k(\Gamma^p_1(\frn,\wp^r))\to S_k(\Gamma^p_1(\frn,\wp^{r+1})).
\]
For any $d\in (A/\frn\wp^{r+1}A)^\times$, we have 
\[
\langle d\rangle_{\frn\wp^{r+1}}\circ \iota=\iota\circ \langle d\bmod \frn\wp^r\rangle_{\frn\wp^{r}}.
\]
By the explicit description of $T_Q$ in \cite[\S3.1]{Ha_DMBF}, this implies that the map $\iota$ is compatible with all Hecke operators.

\begin{lem}\label{LemFiniteSlopeGenES}
	For any $\alpha\in \Cinf^\times$, the map $\iota$ induces an isomorphism
	\[
	S_k(\Gamma^p_1(\frn,\wp^r))(U_\wp-\alpha)\to S_k(\Gamma^p_1(\frn,\wp^{r+1}))(U_\wp-\alpha)
	\]
	which is compatible with all Hecke operators.
\end{lem}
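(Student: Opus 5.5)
The plan is the usual ``$U_\wp$ lowers the level'' argument, made possible by the shape of $\Gamma_1^p(\frn,\wp^r)$. By definition, membership in this group imposes at $\wp$ only that the diagonal entries be $\equiv 1 \bmod \wp$ and that the lower-left entry be $\equiv 0\bmod \wp^r$. Injectivity of $\iota$ and compatibility with all Hecke operators are already known from the discussion before the statement. Since $\iota$ commutes with $U_\wp$, it maps $S_k(\Gamma^p_1(\frn,\wp^r))(U_\wp-\alpha)$ injectively into $S_k(\Gamma^p_1(\frn,\wp^{r+1}))(U_\wp-\alpha)$. So the whole content is surjectivity.

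\textbf{Key step.} I will show that $U_\wp$ maps $S_k(\Gamma^p_1(\frn,\wp^{r+1}))$ into $\iota(S_k(\Gamma^p_1(\frn,\wp^r)))$. Using the explicit description of \cite[\S3.1]{Ha_DMBF}, write $f|U_\wp=\sum_{\beta}f|_k\xi_\beta$ with $\xi_\beta=\begin{pmatrix}1&\beta\\0&\wp\end{pmatrix}$, where $\beta$ runs over the polynomials of degree $<\deg(\wp)$. Take $\gamma=\begin{pmatrix}a&b\\c&d\end{pmatrix}\in\Gamma^p_1(\frn,\wp^r)$. Then $a\equiv d\equiv 1\bmod \wp$ (this uses $r\geq 1$) and $c\equiv 0\bmod \frn\wp^r$. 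Compute
\[
\xi_\beta\gamma\xi_{\beta'}^{-1}=\begin{pmatrix}a+\beta c& \bigl(b+\beta d-(a+\beta c)\beta'\bigr)/\wp\\ \wp c& d-\beta' c\end{pmatrix}.
\]
Because $a+\beta c\equiv a$ is a unit mod $\wp$, there is a unique $\beta'$ with $(a+\beta c)\beta'\equiv b+\beta d\bmod\wp$, and $\beta\mapsto\beta'$ is a bijection. For this $\beta'$ the matrix $\gamma'=\xi_\beta\gamma\xi_{\beta'}^{-1}$ lies in $\mathit{SL}_2(A)$, and:
\begin{itemize}
\item its lower-left entry $\wp c$ is $\equiv 0\bmod \frn\wp^{r+1}$;
\item its diagonal entries are $\equiv 1 \bmod \frn$, since $c\equiv 0\bmod\frn$;
\item its diagonal entries are $\equiv 1\bmod\wp$, i.e.\ they lie in $\Theta_{r+1}$ modulo $\wp^{r+1}$.
\end{itemize}
Hence $\gamma'\in\Gamma^p_1(\frn,\wp^{r+1})$. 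Then $f|U_\wp|_k\gamma=\sum_\beta f|_k\gamma'\xi_{\beta'}=\sum_{\beta'}f|_k\xi_{\beta'}=f|U_\wp$, so $f|U_\wp$ is $\Gamma^p_1(\frn,\wp^r)$-invariant. Cuspidality is preserved by $U_\wp$, so $f|U_\wp$ lies in the image of $\iota$. I expect this matrix bookkeeping to be the only real obstacle. The point to check carefully is that the congruence conditions of $\Gamma^p_1$ at $\wp$ are only modulo $\wp$ on the diagonal, which is exactly what makes $\gamma'$ land in the smaller group; for $\Gamma_1(\frn\wp^r)$ itself this would fail.

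\textbf{Conclusion.} Let $W=S_k(\Gamma^p_1(\frn,\wp^{r+1}))(U_\wp-\alpha)$. Since $\alpha\neq0$, $U_\wp$ restricts to an automorphism of the finite-dimensional space $W$, so $W=U_\wp(W)\subseteq \iota(S_k(\Gamma^p_1(\frn,\wp^r)))$. Because $\iota$ is injective and commutes with $U_\wp$, the generalized $\alpha$-eigenvectors in $\iota(S_k(\Gamma^p_1(\frn,\wp^r)))$ are precisely the images of those in $S_k(\Gamma^p_1(\frn,\wp^r))$. Hence $W=\iota\bigl(S_k(\Gamma^p_1(\frn,\wp^r))(U_\wp-\alpha)\bigr)$, which is surjectivity. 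Hecke compatibility of the resulting isomorphism is inherited from that of $\iota$.
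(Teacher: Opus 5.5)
Your proof is correct and is essentially the paper's argument. Your key step, that $U_\wp$ maps $S_k(\Gamma^p_1(\frn,\wp^{r+1}))$ into $\iota(S_k(\Gamma^p_1(\frn,\wp^r)))$, is the paper's factorization $U_\wp=\iota\circ s$ with $s(f)=\sum_{\beta}f|_k\xi_\beta$, which the paper reads off from the double coset decomposition $\Gamma^p_1(\frn,\wp^{r+1})\begin{pmatrix}1&0\\0&\wp\end{pmatrix}\Gamma^p_1(\frn,\wp^{r})=\coprod_\beta\Gamma^p_1(\frn,\wp^{r+1})\xi_\beta$ and which you verify by explicit matrix computation; your finish via invertibility of $U_\wp$ on the generalized $\alpha$-eigenspace is the paper's observation that $\Coker(\iota_\alpha)\subseteq\Coker(\iota)$ is killed by $U_\wp$ and so vanishes because $\alpha\neq 0$.
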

\begin{proof}
	Write $\Gamma_r=\Gamma^p_1(\frn,\wp^r)$.
	Let $R(\wp)\subseteq A$ be a complete set of representatives of $A/\wp A$. We can write
	\[
	\Gamma_{r+1}\begin{pmatrix}
		1&0\\0&\wp
	\end{pmatrix}\Gamma_r=\coprod_{\beta\in R(\wp)} \Gamma_{r+1}\xi_\beta,\quad \xi_\beta=\begin{pmatrix}
	1&\beta\\0&\wp
	\end{pmatrix}.
	\] 
	Then we have a $\Cinf$-linear map
	\[
	s:S_k(\Gamma_{r+1})\to S_k(\Gamma_r),\quad f\mapsto \sum_{\beta\in R(\wp)} f|_k\xi_\beta
	\]
	satisfying the commutative diagram
	\[
	\xymatrix{
		S_k(\Gamma_r) \ar[r]^\iota\ar[d]_{U_\wp}& S_k(\Gamma_{r+1})\ar[d]^{U_\wp}\ar[ld]_s\\
			S_k(\Gamma_r) \ar[r]_\iota& S_k(\Gamma_{r+1}).
	}
	\]
	This implies $U_\wp=0$ on $\Coker(\iota)$. Since $\iota$ commutes with $U_\wp$, it induces an injection
	\[
	\iota_\alpha: S_k(\Gamma_r)(U_\wp-\alpha)\to S_k(\Gamma_{r+1})(U_\wp-\alpha).
	\] 
	By (\ref{EqnGenESDecomp}), we see that $\Coker(\iota_\alpha)$ is a $\Cinf$-subspace of $\Coker(\iota)$.
	Since $\alpha\neq 0$, 
	we have $\Coker(\iota_\alpha)=0$ and the map $\iota_\alpha$ is also surjective.
	Since $\iota$ is compatible with all Hecke operators, so is $\iota_\alpha$.
\end{proof}



\section{Freeness under the $\Theta_r$-action}\label{SecFree}

In this section, write $\Gamma=\Gamma_1(\frn\wp^r)$ and $\tilde{\Gamma}=\Gamma_1^p(\frn,\wp^r)$,
so that $\tilde{\Gamma}\rhd\Gamma$ and $\Theta_r=\tilde{\Gamma}/\Gamma$. 

Consider the right $\bZ[\tilde{\Gamma}]$-modules
\[
\tilde{L}_1:=\bZ[\bar{\cT}_1^{o,\tilde{\Gamma}\text{-}\st}]\quad \text{and} \quad
\tilde{L}_0:=\bZ[\cT_0^{\tilde{\Gamma}\text{-}\st}],
 \] 
on which the group $\tilde{\Gamma}$ acts by $s|_{\tilde{\gamma}}=\tilde{\gamma}^{-1}\circ s$ as before. 
Let $\tilde{\Lambda}_{1}\subseteq \cT_1^{o,\tilde{\Gamma}\text{-}\st}$ and $\tilde{\Lambda}_{0}\subseteq \cT_0^{\tilde{\Gamma}\text{-}\st}$ be complete sets of representatives
of 
\[
\tilde{\Gamma}\backslash \cT_1^{o,\tilde{\Gamma}\text{-}\st}/\{\pm 1\}\quad\text{and}\quad \tilde{\Gamma}\backslash \cT_0^{\tilde{\Gamma}
\text{-}\st},
\]
respectively. By \cite[Ch.~II, \S2.9, Theorem 13' (a)]{Se}, the sets $\tilde{\Lambda}_{1}$ and $\tilde{\Lambda}_{0}$ are finite. 
Moreover, we have
\begin{equation}\label{EqnResolAreFree}
\tilde{L}_1=\bigoplus_{e\in \tilde{\Lambda}_{1}}[e] \bZ[\tilde{\Gamma}],\quad 
\tilde{L}_0=\bigoplus_{v\in \tilde{\Lambda}_{0}}[v] \bZ[\tilde{\Gamma}].
\end{equation}
Since the right $\bZ[\Gamma]$-module $\bZ[\tilde{\Gamma}]$ is free of finite rank, we see that 
the sequence (\ref{EqnExactSt}) for $\tilde{\Gamma}$ is a split exact sequence of right $\bZ[\Gamma]$-modules. Hence we obtain an exact sequence
of $\Cinf$-vector spaces
\begin{equation}\label{EqnExactFreeResol}
	\xymatrix{
		0\ar[r] & \cV_k(\Gamma)\ar[r] & \tilde{L}_1\otimes_{\bZ[\Gamma]}V_k(\Cinf) \ar[r] & 
		\tilde{L}_0\otimes_{\bZ[\Gamma]}V_k(\Cinf) \ar[r] & 0,
	}
\end{equation}
which is compatible with natural right $\Theta_r$-actions.

\begin{lem}\label{LemThetaFreeOfResolution}
	For any $i\in\{0,1\}$, the right $\Cinf[\Theta_r]$-module $\tilde{L}_i\otimes_{\bZ[\Gamma]}V_k(\Cinf)$ is free of finite rank.
\end{lem}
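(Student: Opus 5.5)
Using the decomposition (\ref{EqnResolAreFree}), we have $\tilde{L}_i=\bigoplus_{s\in\tilde{\Lambda}_i}[s]\bZ[\tilde{\Gamma}]$ as right $\bZ[\tilde{\Gamma}]$-modules, with $\tilde{\Lambda}_i$ finite. Since $\otimes_{\bZ[\Gamma]}$ commutes with finite direct sums, and the right $\tilde{\Gamma}$-action on $M\otimes_{\bZ[\Gamma]}N$ is functorial in $M$, we get a $\Theta_r$-equivariant decomposition
\[
\tilde{L}_i\otimes_{\bZ[\Gamma]}V_k(\Cinf)\simeq\bigoplus_{s\in\tilde{\Lambda}_i}\bZ[\tilde{\Gamma}]\otimes_{\bZ[\Gamma]}V_k(\Cinf),
\]
where $\bZ[\tilde{\Gamma}]$ carries the right $\tilde{\Gamma}$-action by right multiplication, and $\Gamma$ acts on the left factor through right multiplication. (Here the left $\bZ[\Gamma]$-structure on $\bZ[\tilde{\Gamma}]$ implicit in the tensor product is the right one twisted by inversion, matching the convention $s|_\gamma=\gamma^{-1}\circ s$.) So it suffices to show that $M:=\bZ[\tilde{\Gamma}]\otimes_{\bZ[\Gamma]}V_k(\Cinf)$ is a free right $\Cinf[\Theta_r]$-module of finite rank.

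For this, choose a set of representatives $\{\tilde{\gamma}_\theta\}_{\theta\in\Theta_r}\subseteq\tilde{\Gamma}$ of $\Gamma\backslash\tilde{\Gamma}$; it is indexed by $\Theta_r$ via (\ref{EqnIsomQuotTheta}), using that $\Gamma$ is normal. Then $\bZ[\tilde{\Gamma}]=\bigoplus_\theta \bZ[\Gamma]\tilde{\gamma}_\theta$ as a $\bZ[\Gamma]$-module on the appropriate side. Hence $M=\bigoplus_\theta \tilde{\gamma}_\theta\otimes V_k(\Cinf)$, and each summand is isomorphic to $V_k(\Cinf)$ as a $\Cinf$-vector space. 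Thus $\dim_{\Cinf}M=|\Theta_r|(k-1)$.

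The right action of $\tilde{\gamma}\in\tilde{\Gamma}$ sends $\tilde{\gamma}_\theta\otimes\omega$ to $\tilde{\gamma}_\theta\tilde{\gamma}\otimes\tilde{\gamma}^{-1}\circ\omega$. Rewriting $\tilde{\gamma}_\theta\tilde{\gamma}=\delta\,\tilde{\gamma}_{\theta\theta'}$ with $\delta\in\Gamma$ and moving $\delta$ across the tensor shows that the action of $\theta'$ permutes the summands simply transitively, up to $\Cinf$-linear automorphisms of $V_k(\Cinf)$. Concretely, fix a $\Cinf$-basis $\omega_1,\dots,\omega_{k-1}$ of $V_k(\Cinf)$. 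The $\Cinf[\Theta_r]$-submodule generated by $\tilde{\gamma}_1\otimes\omega_j$ for $j=1,\dots,k-1$ has image under $\theta'$ lying in the summand indexed by $\theta'$, and spanning it. Hence these $k-1$ elements generate $M$ over $\Cinf[\Theta_r]$. The resulting surjection $\Cinf[\Theta_r]^{k-1}\to M$ is between $\Cinf$-spaces of equal dimension $|\Theta_r|(k-1)$, so it is an isomorphism. An alternative cleaner formulation is to note $M\simeq V_k(\Cinf)\otimes_{\Cinf}\Cinf[\Theta_r]$, via the standard fact that an induced module tensored with a restricted representation is induced: $\Ind_\Gamma^{\tilde{\Gamma}}(\mathrm{Res}\,V)\simeq V\otimes\Ind_\Gamma^{\tilde{\Gamma}}\mathbf{1}$, followed by taking $\Gamma$-coinvariants.

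The main obstacle is bookkeeping of the left/right conventions: the tensor product over $\bZ[\Gamma]$, the action $(m\otimes n)\tilde{\gamma}=m\tilde{\gamma}\otimes\tilde{\gamma}^{-1}n$, and the identification $\Theta_r=\tilde{\Gamma}/\Gamma$ must all be made consistent. Once they are, the free generation argument is the simple dimension count above. No semisimplicity is needed, which is essential in characteristic $p$.
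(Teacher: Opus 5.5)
Your proposal is correct. Its first step, reducing via (\ref{EqnResolAreFree}) to $M=\bZ[\tilde{\Gamma}]\otimes_{\bZ[\Gamma]}V_k(\Cinf)$, is the same as the paper's, but you handle $M$ differently.

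The paper uses the projection formula. It writes down the isomorphism $[\tilde{\gamma}]\otimes v\mapsto[\tilde{\gamma}]\otimes 1\otimes\tilde{\gamma}\circ v$ onto $(\Ind_{\Gamma}^{\tilde{\Gamma}}\mathbf{1})\otimes_{\Cinf}V_k(\Cinf)$. It notes that this map is $\Theta_r$-equivariant when $\Theta_r$ acts trivially on $V_k(\Cinf)$, and then identifies $\Ind_{\Gamma}^{\tilde{\Gamma}}\mathbf{1}\simeq\Cinf[\Theta_r]$.

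You argue more elementarily. You split $M$ along coset representatives into $|\Theta_r|$ copies of $V_k(\Cinf)$ and observe that $\Theta_r$ permutes these copies simply transitively through $\Cinf$-linear isomorphisms. It follows that the $k-1$ elements $\tilde{\gamma}_1\otimes\omega_j$ generate $M$, and a dimension count finishes the proof.

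What each route buys:
\begin{itemize}
\item Yours needs only surjectivity plus dimensions, so it never has to check that $\pi$ is well defined and equivariant.
\item The paper's gives an explicit isomorphism $M\simeq\Cinf[\Theta_r]\otimes_{\Cinf}V_k(\Cinf)$ and explains the structure. Twisting the $\tilde{\gamma}_\theta$-summand by $\tilde{\gamma}_\theta\circ$ is exactly what turns the linear transition maps $\omega\mapsto\delta'\tilde{\gamma}^{-1}\circ\omega$ in your computation into identities.
\end{itemize}

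Your ``cleaner formulation'' is precisely the paper's proof. No further passage to $\Gamma$-coinvariants is needed there, since $\bZ[\tilde{\Gamma}]\otimes_{\bZ[\Gamma]}V_k(\Cinf)$ is already the induced module.
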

\begin{proof}
	By (\ref{EqnResolAreFree}), we have an isomorphism of right $\Cinf[\Theta_r]$-modules
	\[
	\tilde{L}_i\otimes_{\bZ[\Gamma]}V_k(\Cinf)\to \bigoplus_{s\in \tilde{\Lambda}_{i}} \bZ[\tilde{\Gamma}]\otimes_{\bZ[\Gamma]}V_k(\Cinf).
	\]
	Thus we reduce ourselves to showing that the right $\Cinf[\Theta_r]$-module
	\[
	 \bZ[\tilde{\Gamma}]\otimes_{\bZ[\Gamma]}V_k(\Cinf)=\Ind_{\Gamma}^{\tilde{\Gamma}}\Res_{\Gamma}^{\tilde{\Gamma}}V_k(\Cinf)
	\]
	is free of finite rank.
	
	Let $\mathbf{1}$ be the trivial representation of $\Gamma$ over $\Cinf$. By the projection formula, we have a natural isomorphism of left $\Cinf[\tilde{\Gamma}]$-modules
	\[
	\pi:\Ind_{\Gamma}^{\tilde{\Gamma}}\Res_{\Gamma}^{\tilde{\Gamma}}V_k(\Cinf)\to (\Ind_{\Gamma}^{\tilde{\Gamma}}\mathbf{1})\otimes_{\Cinf} V_k(\Cinf)
	\]
	which is defined by
	\[
	\begin{aligned}
	 \bZ[\tilde{\Gamma}]\otimes_{\bZ[\Gamma]}V_k(\Cinf)&\to  (\bZ[\tilde{\Gamma}]\otimes_{\bZ[\Gamma]}\Cinf)\otimes_{\Cinf} V_k(\Cinf),\\
	 [\tilde{\gamma}]\otimes v&\mapsto [\tilde{\gamma}]\otimes 1 \otimes \tilde{\gamma}\circ v.
	 \end{aligned}
	\]
	On the target of the map $\pi$, we consider the trivial $\Theta_r$-action on $V_k(\Cinf)$ and
	the natural right $\Theta_r$-action on $\bZ[\tilde{\Gamma}]\otimes_{\bZ[\Gamma]}\Cinf$ given by
	\[
	([\tilde{\gamma}]\otimes 1)d=[\tilde{\gamma}\eta_{[1,d]}]\otimes 1
	\]
	for any $d\in \Theta_r$.
	Then the map $\pi$ is $\Theta_r$-equivariant. Moreover, with the latter $\Theta_r$-action, 
	we have an isomorphism of right $\Cinf[\Theta_r]$-modules
	\[
	\bZ[\tilde{\Gamma}]\otimes_{\bZ[\Gamma]}\Cinf\to \Cinf[\Theta_r],\quad [\eta_{[1,d]}]\otimes 1\mapsto [d].
	\]
	Hence we obtain an isomorphism of right $\Cinf[\Theta_r]$-modules
	\[
	\bZ[\tilde{\Gamma}]\otimes_{\bZ[\Gamma]}V_k(\Cinf)\simeq \Cinf[\Theta_r]^{\oplus k-1}.
	\]
	This concludes the proof.
\end{proof}

The following proposition is a generalization of \cite[Proposition 4.8]{Ha_DMO} which treats the case of $\frn=1$, $\wp=t$ and $k=2$.

\begin{prop}\label{PropFree}
	The $\Cinf[\Theta_r]$-module $S_k(\Gamma_1(\frn\wp^r))$ is free of finite rank.
\end{prop}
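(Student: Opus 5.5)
Via Lemma \ref{LemQuotAction} (with $\tilde{\Gamma}=\Gamma_1^p(\frn,\wp^r)$, $\Gamma=\Gamma_1(\frn\wp^r)$ and $G=\Theta_r$), the isomorphism $\Phi_\Gamma\circ\Res_\Gamma$ identifies $S_k(\Gamma_1(\frn\wp^r))$ with $\cV_k(\Gamma)$ as right $\Cinf[\Theta_r]$-modules. We first check that the action of $\tilde{\gamma}$ via $|_k$ matches the diamond action $d\mapsto\langle[1,d]\rangle_{\frn\wp^r}$ under (\ref{EqnIsomQuotTheta}). Indeed, we may take $\eta_{[1,d]}\in\tilde{\Gamma}$, since such an element is $\equiv\begin{pmatrix}*&*\\0&1\end{pmatrix}$ mod $\frn$ and $\equiv\begin{pmatrix}*&*\\0&d\end{pmatrix}$ mod $\wp^r$ with $d\in\Theta_r$. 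It therefore suffices to prove that $\cV_k(\Gamma)$ is a free $\Cinf[\Theta_r]$-module of finite rank.

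For this we use the exact sequence (\ref{EqnExactFreeResol}) of right $\Cinf[\Theta_r]$-modules,
\[
0\to \cV_k(\Gamma)\to \tilde{L}_1\otimes_{\bZ[\Gamma]}V_k(\Cinf)\to \tilde{L}_0\otimes_{\bZ[\Gamma]}V_k(\Cinf)\to 0.
\]
By Lemma \ref{LemThetaFreeOfResolution}, its middle and right terms are free of finite rank over $\Cinf[\Theta_r]$. Since the right term is free, hence projective, the sequence splits as a sequence of $\Cinf[\Theta_r]$-modules. Then $\cV_k(\Gamma)$ is a direct summand of a free $\Cinf[\Theta_r]$-module of finite rank, and Lemma \ref{LemGrpRingLocal} shows that it is free of finite rank.

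The step needing the most care is the equivariance bookkeeping. First, the sequence (\ref{EqnExactFreeResol}) must be $\Theta_r$-equivariant for the action induced from $\tilde{\Gamma}$ as in \S\ref{SecHC}. This holds because (\ref{EqnExactSt}) for $\tilde{\Gamma}$ is a sequence of $\tilde{\Gamma}$-modules and the $G$-action is functorial in $M$. Second, the identification of $\St\otimes_{\bZ[\Gamma]}V_k(\Cinf)$ with $\cV_k(\Gamma)$ used in Lemma \ref{LemQuotAction}, which is built from $\Gamma$-stable edges, must be compatible with the one built from $\tilde{\Gamma}$-stable edges. Both are defined as $\St\otimes_{\bZ[\Gamma]}V_k(\Cinf)$ with the $\Theta_r$-action coming from the $\tilde{\Gamma}$-action on $\St$, so the two structures agree. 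It remains to identify this abstract $G$-action with the diamond operators, which is the matching carried out in the first paragraph.
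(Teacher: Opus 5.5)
Your proposal is correct and follows the paper's proof: you reduce to $\cV_k(\Gamma_1(\frn\wp^r))$ via Lemma \ref{LemQuotAction}, realize it as a direct summand of the free module in (\ref{EqnExactFreeResol}) using Lemma \ref{LemThetaFreeOfResolution}, and conclude by Lemma \ref{LemGrpRingLocal}. Your extra bookkeeping makes explicit what the paper leaves implicit: choosing $\eta_{[1,d]}\in\Gamma_1^p(\frn,\wp^r)$ works because $\det=1$ forces the upper-left entry to be $1$ mod $\frn$ and $d^{-1}\in\Theta_r$ mod $\wp^r$, and the splitting follows from projectivity of the free quotient.
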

\begin{proof}
By Lemma \ref{LemQuotAction}, it is enough to show that the right $\Cinf[\Theta_r]$-module $\cV_k(\Gamma_1(\frn\wp^r))$ is free of finite rank.
By (\ref{EqnExactFreeResol}) and Lemma \ref{LemThetaFreeOfResolution}, 
we see that the right $\Cinf[\Theta_r]$-module $\cV_k(\Gamma_1(\frn\wp^r))$ is a direct summand of a free $\Cinf[\Theta_r]$-module of finite rank. 
Hence the proposition
follows from Lemma \ref{LemGrpRingLocal}.
\end{proof}



\section{Constancy of Hecke eigensystems}\label{SecConst}

For any $Q\in \Pi$ and $\alpha\in \bar{K}$, we denote by $m(\Gamma_1^\Theta(\frn,\wp^r),k,Q,\alpha)$ the multiplicity of $\alpha$ as an eigenvalue of $T_Q$ acting on
$S_k(\Gamma_1^\Theta(\frn,\wp^r))$. By definition, we have
\[
m(\Gamma_1^\Theta(\frn,\wp^r),k,Q,\alpha)=\dim_{\Cinf}S_k(\Gamma_1^\Theta(\frn,\wp^r))(T_Q-\alpha).
\]
For any $a\in \bQ\cup\{+\infty\}$, we denote by $d(\Gamma_1^\Theta(\frn,\wp^r),k,a)$ the multiplicity of eigenvalues of slope $a$ of $U_\wp$ acting on
$S_k(\Gamma_1^\Theta(\frn,\wp^r))$, so that
\begin{equation}\label{EqnSlopeMulti}
d(\Gamma_1^\Theta(\frn,\wp^r),k,a)=\sum_{\alpha\in \EV(\Gamma_1^\Theta(\frn,\wp^r),k,\wp),\ v_\wp(\iota_\wp(\alpha))=a} m(\Gamma_1^\Theta(\frn,\wp^r),k,\wp,
\alpha).
\end{equation}

\begin{thm}\label{ThmMultiTheta}
	For any $Q\in \Pi$ and $\alpha\in \bar{K}$, we have
	\[
	m(\Gamma_1(\frn\wp^r),k,Q,\alpha)=|\Theta| m(\Gamma_1^\Theta(\frn,\wp^r),k,Q,\alpha).
	\]
	In particular, for any $a\in \bQ\cup\{+\infty\}$ we have
	\[
	\begin{aligned}
	\EV(\Gamma_1(\frn\wp^r),k,Q)&=\EV(\Gamma_1^\Theta(\frn,\wp^r),k,Q),\\
	 d(\Gamma_1(\frn\wp^r),k,a)&=|\Theta|d(\Gamma_1^\Theta(\frn,\wp^r),k,a).
	\end{aligned}
	\]
\end{thm}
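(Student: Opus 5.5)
The plan is to use Proposition \ref{PropFree} together with Lemma \ref{LemGrpRingLocal}, applied to $\Theta\subseteq\Theta_r$.

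\textbf{Step 1: freeness over $\Cinf[\Theta]$.} By Proposition \ref{PropFree}, $S:=S_k(\Gamma_1(\frn\wp^r))$ is a free $\Cinf[\Theta_r]$-module of finite rank. The group ring $\Cinf[\Theta_r]$ is free as a module over $\Cinf[\Theta]$, with basis given by coset representatives of $\Theta_r/\Theta$. Hence $S$ is also free of finite rank over $\Cinf[\Theta]$.

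\textbf{Step 2: the generalized eigenspace is free.} Fix $Q\in\Pi$ and $\alpha\in\bar{K}$. The decomposition (\ref{EqnGenESDecomp}) for level $\Gamma_1(\frn\wp^r)$ has summands stable under the diamond operators, since these commute with $T_Q$. Thus it is a decomposition of $\Cinf[\Theta]$-modules, and $S(T_Q-\alpha)$ is a direct summand of $S$. By Lemma \ref{LemGrpRingLocal}, $S(T_Q-\alpha)\simeq\Cinf[\Theta]^{\oplus n}$ for some $n$. Taking dimensions over $\Cinf$ gives
\[
m(\Gamma_1(\frn\wp^r),k,Q,\alpha)=|\Theta|\,n.
\]

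\textbf{Step 3: the invariants.} We have $\Cinf[\Theta]^\Theta=\Cinf\cdot\sum_{d\in\Theta}[d]$, which is one-dimensional. Hence $\dim_{\Cinf}S(T_Q-\alpha)^\Theta=n$.

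It remains to identify $S(T_Q-\alpha)^\Theta$ with $S_k(\Gamma_1^\Theta(\frn,\wp^r))(T_Q-\alpha)$. This is the step needing the most care. We use (\ref{EqnThetaFixed}), $S^\Theta=S_k(\Gamma_1^\Theta(\frn,\wp^r))$, and the fact that the inclusion into $S$ is $T_Q$-equivariant. Taking $\Theta$-invariants of the decomposition $S=\bigoplus_\beta S(T_Q-\beta)$ gives
\[
S^\Theta=\bigoplus_\beta S(T_Q-\beta)^\Theta,
\]
since each summand is $\Theta$-stable. Each $S(T_Q-\beta)^\Theta$ lies in $S^\Theta\cap\Ker(T_Q-\beta)^m$. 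Conversely, any element of $S^\Theta$ killed by $(T_Q-\beta)^m$ lies in $S(T_Q-\beta)$, and it is $\Theta$-fixed. Therefore the generalized $\beta$-eigenspace of $T_Q$ on $S^\Theta$ is exactly $S(T_Q-\beta)^\Theta$. This yields
\[
m(\Gamma_1^\Theta(\frn,\wp^r),k,Q,\alpha)=n,
\]
which proves the first formula.

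\textbf{Consequences.} Since $|\Theta|\geq 1$, one multiplicity is nonzero if and only if the other is, so the two sets $\EV$ coincide. The slope formula then follows by summing the first formula with $Q=\wp$ over $\alpha$ of slope $a$, using (\ref{EqnSlopeMulti}) and the equality of the eigenvalue sets.
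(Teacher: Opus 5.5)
Your proof is correct and follows essentially the same route as the paper: freeness from Proposition~\ref{PropFree}, the direct-summand argument via Lemma~\ref{LemGrpRingLocal}, and passage to $\Theta$-invariants via (\ref{EqnThetaFixed}). The only difference is that you restrict scalars to $\Cinf[\Theta]$ and use $\dim_{\Cinf}\Cinf[\Theta]^\Theta=1$, whereas the paper stays over $\Cinf[\Theta_r]$ and uses $\dim_{\Cinf}\Cinf[\Theta_r]^\Theta=[\Theta_r:\Theta]$; your explicit check that taking $\Theta$-invariants is compatible with taking generalized $T_Q$-eigenspaces spells out a step the paper leaves implicit.
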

\begin{proof}
	Since (\ref{EqnGenESDecomp}) is a decomposition as a $\Cinf[\Theta_r]$-module, Proposition \ref{PropFree} and Lemma \ref{LemGrpRingLocal} imply that for any $
	\alpha\in \bar{K}$, the $\Cinf[\Theta_r]$-module $S_k(\Gamma_1(\frn\wp^r))(T_Q-\alpha)$
	is free of finite rank. Write
	\[
	S_k(\Gamma_1(\frn\wp^r))(T_Q-\alpha)=\Cinf[\Theta_r]^{\oplus m}
	\]
	with some integer $m\geq 0$.
	By (\ref{EqnThetaFixed}), this implies
	\[
	S_k(\Gamma_1^\Theta(\frn,\wp^r))(T_Q-\alpha)=(\Cinf[\Theta_r]^\Theta)^{\oplus m}.
	\]
	Since $\dim_{\Cinf}\Cinf[\Theta_r]^\Theta=[\Theta_r:\Theta]$, we obtain the first equality. 
	
	Since $\alpha\in \EV(\Gamma_1(\frn\wp^r),k,Q)$ if and only if $m(\Gamma_1(\frn\wp^r),k,Q,\alpha)\neq 0$ and similarly for
	$\Gamma_1^\Theta(\frn,\wp^r)$, the second equality follows. Then (\ref{EqnSlopeMulti}) implies the third one.
\end{proof}

Note that $\Gamma_1^p(\frn,\wp)=\Gamma_1(\frn\wp)$. 

\begin{cor}\label{CorMultiSlope}
	For any $\alpha\in \bar{K}^\times$ and $a\in \bQ$, we have
	\[
	\begin{aligned}
		m(\Gamma_1(\frn\wp^r),k,\wp,\alpha)&=q^{(r-1)\deg(\wp)}m(\Gamma_1(\frn\wp),k,\wp,\alpha),\\
		d(\Gamma_1(\frn\wp^r),k,a)&=q^{(r-1)\deg(\wp)}d(\Gamma_1(\frn\wp),k,a).
	\end{aligned}
	\]
\end{cor}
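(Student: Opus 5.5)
The plan is to combine Theorem \ref{ThmMultiTheta}, applied with $\Theta=\Theta_r$ and $Q=\wp$, with an iterated use of Lemma \ref{LemFiniteSlopeGenES}.

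First, Theorem \ref{ThmMultiTheta} with $\Theta=\Theta_r$ gives, for every $\alpha\in\bar{K}$,
\[
m(\Gamma_1(\frn\wp^r),k,\wp,\alpha)=|\Theta_r|\, m(\Gamma_1^p(\frn,\wp^r),k,\wp,\alpha).
\]
The group $\Theta_r=1+\wp A_r$ is in bijection with $\wp A/\wp^r A$, which has $q^{(r-1)\deg(\wp)}$ elements. Hence $|\Theta_r|=q^{(r-1)\deg(\wp)}$.

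Second, we compare $\Gamma_1^p(\frn,\wp^r)$ with $\Gamma_1^p(\frn,\wp)=\Gamma_1(\frn\wp)$. Let $\alpha\in\bar{K}^\times$, so $\alpha\neq 0$ in $\Cinf$. Lemma \ref{LemFiniteSlopeGenES} says the inclusion $\iota$ gives an isomorphism
\[
S_k(\Gamma_1^p(\frn,\wp^j))(U_\wp-\alpha)\simeq S_k(\Gamma_1^p(\frn,\wp^{j+1}))(U_\wp-\alpha)
\]
for each $j\ge 1$. Composing these for $j=1,\dots,r-1$ gives
\[
m(\Gamma_1^p(\frn,\wp^r),k,\wp,\alpha)=m(\Gamma_1(\frn\wp),k,\wp,\alpha),
\]
and combining with the first step yields the first equality.

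For the slope statement, let $a\in\bQ$. Every eigenvalue of slope $a$ is nonzero, since zero has slope $+\infty$. All $U_\wp$-eigenvalues lie in $\bar{K}$ by \cite[Proposition 2.2]{Ha_DMBF}. So we sum the first equality over the nonzero $\alpha$ with $v_\wp(\iota_\wp(\alpha))=a$, using (\ref{EqnSlopeMulti}) for both $\Gamma_1(\frn\wp^r)$ and $\Gamma_1(\frn\wp)$. Any $\alpha$ that is an eigenvalue at one level but not the other contributes zero on both sides, by the first equality. This gives the second equality.

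There is no serious obstacle, since everything reduces to earlier results. The two points that need care are computing $|\Theta_r|$ and checking that the finite slope hypothesis, $a\neq+\infty$, is exactly what makes $\alpha\neq 0$, so that Lemma \ref{LemFiniteSlopeGenES} applies.
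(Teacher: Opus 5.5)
Your proposal is correct and follows the paper's proof. You apply Theorem \ref{ThmMultiTheta} with $\Theta=\Theta_r$ and $|\Theta_r|=q^{(r-1)\deg(\wp)}$, then Lemma \ref{LemFiniteSlopeGenES} iterated from level $\wp$ to $\wp^r$ (which the paper leaves implicit), and finally (\ref{EqnSlopeMulti}) using that $a\in\bQ$ forces $\alpha\neq 0$.
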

\begin{proof}
	For any $\alpha\in \bar{K}^\times$, Lemma \ref{LemFiniteSlopeGenES} yields 
	\[
	\begin{aligned}
	m(\Gamma_1^p(\frn,\wp^r),k,\wp,\alpha)&=m(\Gamma_1(\frn\wp),k,\wp,\alpha),\\
	\EV(\Gamma_1^p(\frn,\wp^r),k,\wp)\setminus\{0\}&=\EV(\Gamma_1(\frn\wp),k,\wp)\setminus\{0\}.
	\end{aligned}
	\]
	Since $|\Theta_r|=q^{(r-1)\deg(\wp)}$, Theorem \ref{ThmMultiTheta} and (\ref{EqnSlopeMulti}) yield the corollary.
\end{proof}

Let $\lambda=(\lambda_Q)_{Q\in \Pi}\in \bar{K}^\Pi$. We say that $\lambda$ is a Hecke eigensystem appearing in $S_k(\Gamma_1^\Theta(\frn,\wp^r))$
if there exists a nonzero element $f\in S_k(\Gamma_1^\Theta(\frn,\wp^r))$ satisfying $f|T_Q=\lambda_Q f$ for all $Q\in \Pi$. Let
\[
\begin{aligned}
	S_k(\Gamma_1^\Theta(\frn,\wp^r))(\lambda)&=\bigcap_{Q\in\Pi}S_k(\Gamma_1^\Theta(\frn,\wp^r))(T_Q-\lambda_Q),\\
	m(\Gamma_1^\Theta(\frn,\wp^r),k,\lambda)&=\dim_{\Cinf}S_k(\Gamma_1^\Theta(\frn,\wp^r))(\lambda).
\end{aligned}
\]
Since $\Pi$ is countably infinite, we can choose a bijection
\[
\bZ_{>0}\to \Pi,\quad i\mapsto Q_i.
\]

\begin{lem}\label{LemGenESEigenform}
	We have $S_k(\Gamma_1^\Theta(\frn,\wp^r))(\lambda)\neq 0$ if and only if $\lambda$ is a Hecke eigensystem appearing in $S_k(\Gamma_1^\Theta(\frn,\wp^r))$.
\end{lem}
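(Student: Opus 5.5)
Write $V=S_k(\Gamma_1^\Theta(\frn,\wp^r))$; it is finite dimensional over $\Cinf$, and the Hecke operators $T_Q$ ($Q\in\Pi$) act on it as a commuting family. The plan is to use this commutativity and finite dimensionality to pass from generalized eigenspaces to genuine eigenvectors.

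The direction ``$\Leftarrow$'' is immediate. If $f\neq 0$ satisfies $f|T_Q=\lambda_Q f$ for every $Q$, then $f\in\Ker(T_Q-\lambda_Q)\subseteq V(T_Q-\lambda_Q)$ for all $Q$. Hence $f\in V(\lambda)$, and so $V(\lambda)\neq 0$.

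For ``$\Rightarrow$'', suppose $V(\lambda)\neq 0$ and use the enumeration $i\mapsto Q_i$ of $\Pi$. Put
\[
W_n=\bigcap_{i=1}^n V(T_{Q_i}-\lambda_{Q_i}).
\]
This is a decreasing chain of subspaces of $V$, each containing $V(\lambda)$, so each $W_n\neq 0$.
\begin{itemize}
\item Since the $T_Q$ commute, every generalized eigenspace $V(T_{Q_i}-\lambda_{Q_i})=\Ker((T_{Q_i}-\lambda_{Q_i})^m)$ is stable under all Hecke operators. Therefore each $W_n$ is stable under all $T_Q$.
\item By finite dimensionality the chain stabilizes: there is $N$ with $W_n=W_N$ for all $n\geq N$. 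Then $W_N=\bigcap_{Q\in\Pi}V(T_Q-\lambda_Q)=V(\lambda)$.
\end{itemize}
So $V(\lambda)$ is a nonzero finite-dimensional space, stable under the commuting family $\{T_Q\}_{Q\in\Pi}$. On it, each $T_Q$ has $\lambda_Q$ as its only eigenvalue.

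It remains to produce a common eigenvector, for which we use the standard fact that a commuting family of endomorphisms of a nonzero finite-dimensional space over the algebraically closed field $\Cinf$ has a common eigenvector. To stay within a finite argument, set $U_n=\bigcap_{i\leq n}\Ker((T_{Q_i}-\lambda_{Q_i})|_{V(\lambda)})$ and show inductively that $U_n\neq 0$:
\begin{itemize}
\item $U_0=V(\lambda)\neq 0$.
\item Suppose $U_{n-1}\neq 0$. It is stable under $T_{Q_n}$ by commutativity.
\item The only eigenvalue of $T_{Q_n}$ on $V(\lambda)$, and hence on $U_{n-1}$, is $\lambda_{Q_n}$. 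So $T_{Q_n}-\lambda_{Q_n}$ is nilpotent on $U_{n-1}$ and has nonzero kernel there.
\item That kernel is exactly $U_n$, so $U_n\neq 0$.
\end{itemize}
The $U_n$ form a decreasing chain of nonzero subspaces, so again by finite dimensionality they stabilize at some nonzero $U_M$. Any nonzero $f\in U_M$ then satisfies $f|T_Q=\lambda_Q f$ for every $Q\in\Pi$, so $\lambda$ is a Hecke eigensystem appearing in $V$.

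The only real subtlety is that $\Pi$ is infinite. This is handled by the two stabilization arguments, which rest on finite dimensionality together with the commutativity of Hecke operators noted in \S\ref{SecHecke}.
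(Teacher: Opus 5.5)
Your proof is correct and follows essentially the same route as the paper: your chain $U_n$ of successive kernels of $T_{Q_n}-\lambda_{Q_n}$ inside $V(\lambda)$ is exactly the paper's chain $V_i$. It stays nonzero by nilpotency and stabilizes by finite dimensionality, just as in the paper. The preliminary stabilization of the $W_n$ is not needed, since $V(\lambda)$ is Hecke-stable simply as an intersection of Hecke-stable subspaces; it is harmless, though, and it makes explicit the stability the paper uses tacitly.
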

\begin{proof}
	Write $V_0=S_k(\Gamma_1^\Theta(\frn,\wp^r))(\lambda)$. Since any nonzero eigenform in $S_k(\Gamma_1^\Theta(\frn,\wp^r))$ with Hecke eigensystem $\lambda$
	lies in $V_0$, the ``if'' part is clear.
	
	Suppose $V_0\neq 0$. Since $T_{Q_1}-\lambda_{Q_1}$ is nilpotent on $V_0$, we have 
	\[
	V_1:=\Ker(T_{Q_1}-\lambda_{Q_1}: V_0\to V_0)\neq 0,
	\]
	on which $T_{Q_2}-\lambda_{Q_2}$ is nilpotent.
	
	Thus we can inductively construct a decreasing sequence of nonzero $\Cinf$-vector spaces
	\[
	V_0\supseteq V_1\supseteq \cdots \supseteq V_i\supseteq V_{i+1}\supseteq \cdots
	\]
	such that for any $i\in \bZ_{>0}$, we have 
	\[
	V_{i}=\Ker(T_{Q_{i}}-\lambda_{Q_{i}}:V_{i-1}\to V_{i-1}).
	\]
	Since $\dim_{\Cinf}V_0$ is finite, the sequence is stationary and there exists a positive integer $j$ such that
	\[
	V_j=\{f\in V_0\mid f|(T_Q-\lambda_Q)=0\text{ for all }Q\in\Pi\}.
	\]
	Since $V_j\neq 0$, it follows that $\lambda$ is a Hecke eigensystem appearing in $S_k(\Gamma_1^\Theta(\frn,\wp^r))$.
\end{proof}

\begin{thm}\label{ThmConstancy}
	Let $\lambda=(\lambda_Q)_{Q\in\Pi}\in \bar{K}^\Pi$ satisfy $\lambda_\wp\neq 0$. Then we have
	\[
	m(\Gamma_1(\frn\wp^r),k,\lambda)=q^{(r-1)\deg(\wp)}m(\Gamma_1(\frn\wp),k,\lambda).
	\]
	In particular, $\lambda$ is a Hecke eigensystem appearing in $S_k(\Gamma_1(\frn\wp^r))$ 
	if and only if it is a Hecke eigensystem appearing in $S_k(\Gamma_1(\frn\wp))$. 
\end{thm}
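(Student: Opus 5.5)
Following the proof of Theorem \ref{ThmMultiTheta}, we upgrade freeness from a single generalized eigenspace to the simultaneous generalized eigenspace $S_k(\Gamma_1(\frn\wp^r))(\lambda)$, and then use Lemma \ref{LemFiniteSlopeGenES} to pass from level $\Gamma_1^p(\frn,\wp^r)$ down to $\Gamma_1(\frn\wp)$.

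\emph{Step 1: a finite intersection.} Since $S_k(\Gamma_1(\frn\wp^r))$ is finite dimensional, the decreasing chain
\[
W_i=\bigcap_{j\le i}S_k(\Gamma_1(\frn\wp^r))(T_{Q_j}-\lambda_{Q_j})
\]
stabilizes. Hence $S_k(\Gamma_1(\frn\wp^r))(\lambda)$ is the intersection of finitely many generalized eigenspaces.

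\emph{Step 2: direct summand.} The Hecke operators commute with each other and with the diamond operators. Therefore the decompositions (\ref{EqnGenESDecomp}) for $T_{Q_1},\dots,T_{Q_j}$ refine each other, and give a decomposition of $S_k(\Gamma_1(\frn\wp^r))$ into $\Theta_r$-stable joint generalized eigenspaces indexed by tuples of eigenvalues. So $S_k(\Gamma_1(\frn\wp^r))(\lambda)$ is a $\Cinf[\Theta_r]$-direct summand. By Proposition \ref{PropFree} and Lemma \ref{LemGrpRingLocal}, it is free, say $\Cinf[\Theta_r]^{\oplus m}$.

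\emph{Step 3: taking invariants.} Taking $\Theta_r$-invariants commutes with the joint decomposition. Combined with (\ref{EqnThetaFixed}), this gives
\[
S_k(\Gamma_1^p(\frn,\wp^r))(\lambda)=(\Cinf[\Theta_r]^{\Theta_r})^{\oplus m},
\]
which has dimension $m$. Hence
\[
m(\Gamma_1(\frn\wp^r),k,\lambda)=q^{(r-1)\deg(\wp)}\,m(\Gamma_1^p(\frn,\wp^r),k,\lambda).
\]

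\emph{Step 4: descending the level.} It remains to show $m(\Gamma_1^p(\frn,\wp^r),k,\lambda)=m(\Gamma_1(\frn\wp),k,\lambda)$, by induction on $r$ using $\Gamma_1^p(\frn,\wp)=\Gamma_1(\frn\wp)$. Set $\alpha=\lambda_\wp\neq 0$. By Lemma \ref{LemFiniteSlopeGenES}, $\iota$ gives a Hecke-equivariant isomorphism
\[
S_k(\Gamma^p_1(\frn,\wp^r))(U_\wp-\alpha)\cong S_k(\Gamma^p_1(\frn,\wp^{r+1}))(U_\wp-\alpha).
\]
Both $S_k(\Gamma_1^p(\frn,\wp^r))(\lambda)$ and $S_k(\Gamma_1^p(\frn,\wp^{r+1}))(\lambda)$ sit inside these $U_\wp$-generalized eigenspaces. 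On each side, $S_k(\,\cdot\,)(\lambda)$ equals the intersection, inside that $U_\wp$-eigenspace, of the generalized eigenspaces of the restrictions of $T_Q$. This is because the generalized eigenspace of $T_Q$ on the whole space, intersected with an invariant summand, is the generalized eigenspace of the restriction. A Hecke-equivariant isomorphism preserves such intersections, so the two dimensions agree.

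\emph{Conclusion and expected obstacle.} Combining Steps 3 and 4 gives the multiplicity formula. The ``in particular'' then follows from Lemma \ref{LemGenESEigenform}, since $m\neq 0$ exactly when $\lambda$ appears. The only delicate point is Step 2: we must ensure the joint generalized eigenspace is a $\Theta_r$-direct summand, with complement given by the other joint eigenspaces. This rests on the commutativity of the diamond operators with all $T_Q$, established in \cite[Lemma 2.3]{Ha_DMO}.
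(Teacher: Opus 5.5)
Your proposal is correct and follows essentially the same route as the paper. You stabilize the chain of generalized eigenspaces, note that the result is a $\Cinf[\Theta_r]$-direct summand of the free module $S_k(\Gamma_1(\frn\wp^r))$ because diamond and Hecke operators commute, take $\Theta_r$-invariants, and descend from $\Gamma_1^p(\frn,\wp^r)$ to $\Gamma_1(\frn\wp)$ via Lemma \ref{LemFiniteSlopeGenES}. The only differences are cosmetic. The paper peels off one generalized eigenspace at a time ($S_i=S_{i-1}(T_{Q_i}-\lambda_{Q_i})$) rather than using the joint decomposition. It also leaves implicit the induction on $r$ and the transfer of the joint generalized eigenspace through the Hecke-equivariant isomorphism, both of which you spell out in Step 4.
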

\begin{proof}
	Write $S_0=S_k(\Gamma_1(\frn\wp^r))$. 
	For any $i\in \bZ_{>0}$, we inductively define
	\[
	S_i:=S_{i-1}(T_{Q_i}-\lambda_{Q_i}),
	\]
	so that we have a decreasing sequence of $\Cinf$-vector spaces
	\[
	S_0\supseteq S_1\supseteq \cdots \supseteq S_i\supseteq S_{i+1}\supseteq \cdots.
	\]
	Since $\dim_{\Cinf}S_0$ is finite, the sequence is stationary and there exists a positive integer $j$ satisfying
	\[
	S_j=S_0(\lambda)=\bigcap_{Q\in \Pi} S_k(\Gamma_1(\frn\wp^r))(T_{Q}-\lambda_Q).
	\]
	Then we have
	\[
	S_j^{\Theta_r}=S_k(\Gamma_1^p(\frn,\wp^r))(\lambda)=\bigcap_{Q\in \Pi} S_k(\Gamma_1^p(\frn,\wp^r))(T_{Q}-\lambda_Q).
	\]
	
	For any $i\in \bZ_{>0}$, the $\Cinf$-vector space $S_{i-1}$ is the direct sum of the generalized eigenspaces of $T_{Q_{i}}$ acting on it.
	Since diamond operators commute with all Hecke operators, 
	these direct summands are stable under the $\Theta_r$-action. In particular,
	we see that $S_i$ is a direct summand of $S_{i-1}$ 
	as a $\Cinf[\Theta_r]$-module.
	
	By Proposition \ref{PropFree} and Lemma \ref{LemGrpRingLocal}, this shows that
	the $\Cinf[\Theta_r]$-module $S_j$ is free of finite rank. 
	Since $\dim_{\Cinf}\Cinf[\Theta_r]=q^{(r-1)\deg(\wp)}$ and $\dim_{\Cinf}\Cinf[\Theta_r]^{\Theta_r}=1$, we have
	\[
	m(\Gamma_1(\frn\wp^r),k,\lambda)=q^{(r-1)\deg(\wp)}m(\Gamma_1^p(\frn,\wp^r),k,\lambda).
	\]
	Since $\lambda_\wp\neq 0$, Lemma \ref{LemFiniteSlopeGenES} implies 
	\[
	m(\Gamma_1^p(\frn,\wp^r),k,\lambda)=m(\Gamma_1(\frn\wp),k,\lambda).
	\]
	Hence we obtain the former assertion of the theorem. The latter assertion follows from the former and Lemma \ref{LemGenESEigenform}.
\end{proof}

\begin{rmk}\label{RmkSkipFreeness}
	Note that the only irreducible representation of $\Theta_r$ over $
	\Cinf$ is the trivial representation. Hence, if $S_k(\Gamma_1(\frn\wp^r))(\lambda)$ is nonzero, then its $\Theta_r$-fixed part
	$S_k(\Gamma_1^p(\frn,\wp^r))(\lambda)$ is also nonzero. This also leads to a proof of the latter assertion of Theorem \ref{ThmConstancy}.  
\end{rmk}

\begin{rmk}\label{RmkHalo}
	Corollary \ref{CorMultiSlope} shows a pattern of slopes different from that of elliptic modular forms.
	Let $k\geq 2$ and $r\geq 2$ be integers. Let $\chi:(\bZ/2^r\bZ)^\times\to \bC^\times$ be a Dirichlet character of conductor $2^r$
	satisfying $\chi(-1)=(-1)^k$. Then Buzzard--Kilford \cite[Corollary of Theorem B, (i)]{BK} proved that 
	the $2$-adic slopes appearing in $S_k(\Gamma_0(2^r),\chi)$ form
	an arithmetic progression such that for any integer $i\geq 1$, the $i$th term goes to zero when $r$ goes to infinity. 
	By the assumption on the conductor 
	of $\chi$, this is a behavior of slopes at points on the $2$-adic weight space near the boundary.
	In contrast, by Corollary \ref{CorMultiSlope}, the set of finite slopes in Drinfeld cuspforms
	of level $\Gamma_1(\wp^r)$ is constant when $r$ varies.
	
	A possible explanation for the 
	difference
	is that, since there is no nontrivial
	character $
	\Theta_r\to \Cinf^\times $, every character $(A/\wp^r A)^\times\to \Cinf^\times$ factors through $(A/\wp A)^\times$ and such a boundary behavior does not occur 
	for Drinfeld modular forms.
	
\end{rmk}



\end{document}